\DeclareSymbolFont{AMSb}{U}{msb}{m}{n}    
\providecommand\@dotsep{5}
\definecolor{my-linkcolor}{rgb}{0.75,0,0}
\definecolor{my-citecolor}{rgb}{0,0.5,0}
\definecolor{my-urlcolor}{rgb}{0,0,0.75}
\newcommand{\sslash}{\mathbin{\mathchoice{/\mkern-6mu/}
{/\mkern-6mu/}
{/\mkern-5mu/}
{/\mkern-5mu/}}}
\DeclareMathOperator{\bbC}{\mathbb{C}}
\DeclareMathOperator{\calD}{\mathcal{D}}
\DeclareMathOperator{\calO}{\mathcal{O}}
\DeclareMathOperator{\frakg}{\mathfrak{g}}
\DeclareMathOperator{\frakh}{\mathfrak{h}}
\DeclareMathOperator{\ann}{Ann}
\DeclareMathOperator{\bl}{Bl}
\DeclareMathOperator{\der}{Der}
\DeclareMathOperator{\gr}{gr}
\DeclareMathOperator{\hc}{HC}
\DeclareMathOperator{\rad}{Rad}
\DeclareMathOperator{\rk}{rk}
\DeclareMathOperator*{\lcm}{lcm}
\declaretheorem[parent=section]{theorem}
\declaretheorem[unnumbered, name=Theorem]{theorem*}
\declaretheorem[sibling=theorem]{proposition}
\declaretheorem[unnumbered, name=Proposition]{proposition*}
\declaretheorem[sibling=theorem]{conjecture}
\declaretheorem[unnumbered, name=Conjecture]{conjecture*}
\declaretheorem[sibling=theorem, style=definition]{definition}
\declaretheorem[unnumbered, style=definition, name=Definition]{definition*}
\declaretheorem[sibling=theorem]{lemma}
\declaretheorem[unnumbered, name=Lemma]{lemma*}
\declaretheorem[unnumbered, style=remark, name=Remark]{remark*}
\declaretheorem[unnumbered, style=remark, name=Assumption]{assumption*}
\declaretheorem[sibling=theorem]{corollary}
\declaretheorem[unnumbered , name=Corollary]{corollary*}
\title{The Bernstein-Sato $b$-function of the Vandermonde Determinant}
\author{Asilata Bapat and Robin Walters}
\address[Asilata Bapat]{Department of Mathematics, The University of Chicago, Chicago, IL 60637}
\email{asilata@math.uchicago.edu}
\address[Robin Walters]{Department of Mathematics, The University of Chicago, Chicago, IL 60637}
\email{robin@math.uchicago.edu}
\begin{document}
\begin{abstract}
  The Bernstein-Sato polynomial, or the $b$-function, is an important invariant of singularities of hypersurfaces that is difficult to compute in general.
  We describe a few different results towards computing the $b$-function of the Vandermonde determinant $\xi$.

  We use a result of Opdam to produce a lower bound for the $b$-function of $\xi$.
  This bound proves a conjecture of Budur, Musta{\c{t}}{\u{a}}, and Teitler for the case of finite Coxeter hyperplane arrangements, proving the Strong Monodromy Conjecture in this case.

  In our second set of results, we show the duality of two $\calD$-modules, and conclude that the roots of the $b$-function of $\xi$ are symmetric about $-1$.
  We then use some results about jumping coefficients to prove an upper bound for the $b$-function of $\xi$, and finally we conjecture a formula for the $b$-function of $\xi$.
\end{abstract}
\maketitle

\section{Introduction}

The Bernstein-Sato polynomial, also called the $b$-function, is a relatively fine invariant of singularities of hypersurfaces.
Let $f$ be a polynomial function on an affine space $X$, and let $\calD_X$ be the ring of differential operators on $X$.
Then the $b$-function of $f$ can be defined as the minimal polynomial $b_f(s)$ for the operator $s$ on the holonomic $\calD_X[s]$-module $\calD_X[s]f^s / \calD_X[s]f^{s+1}$ \cite{kas:76}.  
Computing $b_f(s)$ for general $f$ remains a very difficult problem, thus research has focused on certain special cases.
The goal of this paper is to prove some results about the $b$-function of the Vandermonde determinant.

Sato's original case of study for $b$-functions was semi-invariants of group actions on prehomogeneous vector spaces \cite{sat:90,sat.shi:74}. 
It remains true that many of the computed examples of $b$-functions are for invariant or semi-invariant functions $f$.

Another important case is that of $f$ defining a hyperplane arrangement. 
This active area of research draws together the study of $b$-functions with several other important singularity invariants, including Igusa zeta functions, jumping coefficients, and local monodromy \cite{sai:06,sai:07,wal:05,bud.mus.tei:11,bud.sai:10,bud:12}.

Let $G$ be a complex connected reductive Lie group with Lie algebra $\frakg$.
Let $\frakh\subset \frakg$ be a Cartan subalgebra, and let $R \subset \frakh^*$ be the associated root system with Weyl group $W$.
Define $\xi$ to be the product of the positive roots:
\[
\xi = \prod_{\alpha \in R^{+}} \alpha.
\]
The function $\xi$ is anti-symmetric with respect to the $W$-action on $\frakh$ and is the Jacobian determinant of the quotient map $\frakh \to \frakh/W$. 
Its zero locus $V(\xi)$ is a union of hyperplanes, consisting of points fixed by at least one non-trivial element of $W$.  
Thus $V(\xi)$ is the complement of $\frakh^\mathrm{reg}$.  
The $W$-invariant function $\xi^2$ is called the \emph{discriminant} of the root system $R$.
Let $\Delta$ denote the pullback of $\xi^2$ under the Chevalley isomorphism $\bbC[\frakg]^G \overset{\cong}{\longrightarrow} \bbC[\frakh]^W$.

  For a root system of type $A_{n-1}$, we denote $\xi$ by $\xi_n$.
  This polynomial is recognized as the \emph{Vandermonde determinant}: 
  \[
  \xi_n = \prod_{1 \leq i < j \leq n} (x_i - x_j).
  \]
  In this case, $\Delta$ sends a matrix in $\frakg$ to the discriminant of its characteristic polynomial.

For any $S\subset \{1,\dots, n\}$, define the following:
\[
\xi_S = \prod_{\{(i,j)\in S\times S\mid i < j\}} (x_i - x_j).
\]
If $P = \{P_1,\dots, P_r\}$ is a set partition of $\{1,\dots, n\}$, define $\xi_P = \prod_{t=1}^r\xi_{P_i}$.
Given an integer partition $\lambda = (\lambda_1\geq \dots \geq \lambda_r)$ of $n$, denoted by $\lambda \vdash n$, consider the set partition $P$ of $\{1,\dots, n\}$ in which $P_1 = \{1,\dots, \lambda_1\}$, $P_2 = \{\lambda_1+1,\ldots, \lambda_1+\lambda_2\}$, and so on.
We will write $\xi_\lambda$ to mean $\xi_P$ for the above set partition $P$.

We conjecture the following formula for the $b$-function of $\xi_n$.
\begin{conjecture}[Main Conjecture]
  \label{conj:main-conjecture}
  The $b$-function of $\xi_n$ is given by the following recursive formula:
  \[
  b_{\xi_n}(s) = \lcm_{\substack{\lambda\vdash n\\ \lambda \neq (n)}}\left(b_{\xi_\lambda}(s)\right)\cdot \prod_{i=(n-1)}^{(n-1)^2}\left(s + \frac{i}{\binom{n}{2}} \right).
  \]
\end{conjecture}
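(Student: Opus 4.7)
The plan is to prove the conjectured identity by establishing divisibility in both directions, combining the three tools already developed in the paper: the Opdam-based lower bound to produce the explicit product factor, restriction of $\calD$-modules along proper edges of the Coxeter arrangement to produce the recursive $\lcm$ factor, and the jumping-coefficient upper bound together with the duality-induced symmetry of roots about $-1$ to control everything else.

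For the divisibility $(\mathrm{RHS}) \mid b_{\xi_n}(s)$, I would work in two parts. First, for each partition $\lambda \vdash n$ with $\lambda \neq (n)$, the polynomial $\xi_\lambda$ is, up to a unit, the local equation of $\xi_n$ at the generic point of the edge $L_\lambda$ of the Coxeter arrangement associated to $\lambda$. Restricting the holonomic $\calD_X[s]$-module $\calD_X[s]\xi_n^s/\calD_X[s]\xi_n^{s+1}$ to a transversal slice through this generic point, and using compatibility of the $V$-filtration with smooth restriction, should yield $b_{\xi_\lambda}(s) \mid b_{\xi_n}(s)$; taking the $\lcm$ over all such $\lambda$ accounts for the first factor of the right-hand side. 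The second factor should then follow directly from the paper's Opdam-based lower bound, which exhibits each $-i/\binom{n}{2}$ in the stated range as a root of $b_{\xi_n}(s)$.

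For the reverse divisibility $b_{\xi_n}(s) \mid (\mathrm{RHS})$, I would combine the jumping-coefficient upper bound with the symmetry $s \mapsto -2-s$. The upper bound constrains the denominators of rational roots, and the symmetry reduces the problem to the half-open interval $[-1,0)$. Any candidate root here either has the form $-i/\binom{n}{2}$ with $i$ in the predicted range $\{n-1,\dots,(n-1)^2\}$, in which case it is accounted for by the explicit product, or it arises from a jumping coefficient coming from a deeper stratum of $V(\xi_n)$, which is then already absorbed into the recursive $\lcm$ factor by induction. Ruling out any remaining rationals outside this list is precisely the place where the jumping-coefficient bound will need to be sharpened.

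The hardest step will be controlling multiplicities and excluding unwanted roots in $[-1,0)$. Opdam's argument produces each root $-i/\binom{n}{2}$ with multiplicity at least one, but the conjecture asserts multiplicity exactly one in the new factor, while the jumping-coefficient upper bound typically does not bound multiplicities at all. I expect that bridging this gap requires a $W$-equivariant refinement of the $V$-filtration: decomposing $\calD_X[s]\xi_n^s/\calD_X[s]\xi_n^{s+1}$ into isotypic components under the Weyl group action and matching the eigenvalues of $s$ on each component with representation-theoretic data from the rational Cherednik algebra. Combined with an induction on $n$, whose structure mirrors the recursion in the conjecture itself, this would be the route by which I would hope to close the argument.
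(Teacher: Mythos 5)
This statement is labelled a \emph{conjecture} in the paper, and the authors do not prove it; so there is no ``paper's own proof'' to compare against. What the paper does prove is a strictly weaker upper bound, Proposition~\ref{prop:upper-bound}, which contains an extra shifted factor $\lcm_{\lambda}\bigl(b_{\xi_\lambda}(s+1)\bigr)$, and the authors note explicitly that removing it requires Conjecture~\ref{conj:blowupupper}, which remains open. Your proposal therefore cannot be evaluated as ``the same route'' or ``a different route'' to a known proof; it is an attempt at an open problem, and I will flag where it has concrete errors rather than mere incompleteness.

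The most serious error is in your claimed lower bound for the product factor. You write that the Opdam-based result ``exhibits each $-i/\binom{n}{2}$ in the stated range as a root.'' That is not what Theorem~\ref{thm:budur} and Theorem~\ref{thm:n-over-d} give. Pushing Opdam's formula through $b_g(s)\mid b_\xi(2s+1)$ produces roots of the form $-2j/d_i$ where $d_i$ runs over the degrees $2,3,\dots,n$ of the fundamental invariants and $1\le j\le d_i-1$. Already for $n=4$ (so $\binom{n}{2}=6$) these are $-1/2,-2/3,-1,-4/3,-3/2$, i.e.\ $-i/6$ only for $i\in\{3,4,6,8,9\}$, missing $i=5$ and $i=7$ from the conjectured range $\{3,\dots,9\}$. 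So the Opdam bound does not account for the full product $\prod_{i=n-1}^{(n-1)^2}\bigl(s+i/\binom{n}{2}\bigr)$, and your lower-bound half of the argument has a real hole that is independent of the multiplicity issues you flag at the end.

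The upper-bound half of your proposal essentially reproduces the paper's strategy in Proposition~\ref{prop:upper-bound} (jumping coefficients to pin the largest root at $-(n-1)/\binom{n}{2}$, duality symmetry to reflect it, Saito's interval bound to truncate the blow-up product). You are candid that the remaining step---killing the roots coming from $\lcm_\lambda b_{\xi_\lambda}(s+1)$---requires sharpening the jumping-coefficient bound, and this is exactly the content of the paper's Conjecture~\ref{conj:blowupupper}, which they also cannot prove. The Cherednik-algebra heuristic in your last paragraph is an interesting direction but is entirely speculative as written, with no concrete mechanism given. In short: the divisibility $\lcm_\lambda b_{\xi_\lambda}(s)\mid b_{\xi_n}(s)$ is correctly argued (and matches Proposition~\ref{thm:bnm1_divides_bn} via local $b$-functions), but the claimed lower bound for the explicit product factor is factually wrong, and the upper bound has the same unresolved gap as the paper itself.
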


Let $X= \bbC^n$, so that $V(\xi_n)\subset X$.
Our most significant result towards proving this conjecture is the following symmetry property.
\begin{theorem}
  \label{thm:d-module-dual}
  The $\calD_{X}[s]$-module $\calD_{X}[s]\xi_n^s$ is dual to the module $\calD_{X}[s]\xi_n^{-s-1}$ under the Verdier duality functor. 
\end{theorem}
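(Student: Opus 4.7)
The plan is to exploit the fact that $\xi_n$ cuts out the Coxeter arrangement of type $A_{n-1}$, which by Terao's theorem is a free divisor. By Saito's criterion one obtains an explicit $\mathcal{O}_X$-basis $\theta_0, \theta_1, \dots, \theta_{n-1}$ of the module $\der(-\log \xi_n)$ of logarithmic derivations, with $\theta_0$ the Euler field. Since $\xi_n$ is reduced and homogeneous, each $\theta_i$ satisfies $\theta_i(\xi_n) = a_i \xi_n$ for some $a_i \in \calO_X$, and the operators $\theta_i - s\, a_i$ annihilate $\xi_n^s$ in $\calD_X[s]$. The first step is therefore to make the presentation
\[
\calD_X[s]\xi_n^s \;\cong\; \calD_X[s]\big/\bigl(\theta_0 - s\, a_0,\, \dots,\, \theta_{n-1} - s\, a_{n-1}\bigr)
\]
explicit, which identifies the generators and relations one needs to track through the duality.

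Second, I would construct a free resolution of $\calD_X[s]\xi_n^s$ over $\calD_X[s]$ by the logarithmic Spencer complex of $\der(-\log \xi_n)$: a Koszul-style complex whose differential is built from the operators $\theta_i - s\, a_i$. The exactness of this complex, i.e.\ that it is actually a resolution, is the step where the particular geometry of $\xi_n$ enters. Coxeter arrangements are known to be tame and (strongly) Euler-homogeneous, which puts $\xi_n$ in the class of Koszul free divisors studied by Calder\'on-Moreno and Narv\'aez-Macarro, and for such divisors the logarithmic Spencer complex is a resolution. Assuming this, I would apply the Verdier duality functor by dualizing the resolution: since the canonical bundle on $X = \bbC^n$ is trivial, this amounts to applying $\mor_{\calD_X[s]}(-,\calD_X[s])$ and converting the resulting right module into a left module via the standard transpose anti-involution.

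The third step is the identification $\mathbb{D}(\calD_X[s]\xi_n^s) \cong \calD_X[s]\xi_n^{-s-1}$. The transpose of a logarithmic derivation $\theta$ is $-\theta - \operatorname{div}(\theta)$, and for a free divisor the divergence of a basic logarithmic derivation is precisely the coefficient $a_\theta$ appearing in $\theta(\xi_n) = a_\theta \xi_n$ (this is a formula of Saito, exploited by Narv\'aez-Macarro). Consequently, each relation $\theta_i - s\, a_i$ is sent under the transpose to (a unit times) $\theta_i - (-s-1)\, a_i$, which is exactly the defining relation of $\calD_X[s]\xi_n^{-s-1}$ in its Spencer presentation. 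Combining this with the symmetry $s \leftrightarrow -s-1$ finishes the identification.

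The main obstacle will be the Koszul/exactness step: showing that the logarithmic Spencer complex of $\xi_n$ with parameter $s$ really is a free resolution of $\calD_X[s]\xi_n^s$. One route is to invoke the Koszul-free-divisor machinery directly, citing that Coxeter arrangements are locally quasi-homogeneous and tame; a more self-contained route is to write down the basic derivations of the $A_{n-1}$ arrangement explicitly (they are given by the power-sum symmetric polynomials acting by $\sum x_i^{k+1}\partial_i$) and check Koszulity by hand. Everything else — the explicit annihilator, the transpose computation, and the substitution $s \mapsto -s-1$ — is then essentially formal.
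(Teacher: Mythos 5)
Your scaffolding --- free divisor, logarithmic presentation of $\calD_X[s]\xi_n^s$, Spencer-type resolution, dualization, transpose anti-involution, $s \mapsto -s-1$ --- is exactly the Narv\'aez-Macarro machinery that the paper invokes as a black box through \autoref{cor:macarro-summary}. That corollary collapses all the formal steps you describe into one implication: if $V(h)$ is a \emph{strongly Koszul} free divisor, then $\calD_X[s]h^s$ is Verdier dual to $\calD_X[s]h^{-s-1}$. So the entire burden of the theorem is to verify that $V(\xi_n)$ is strongly Koszul, i.e.\ that $\sigma(\delta_1-\alpha_1 s),\dots,\sigma(\delta_n-\alpha_n s),\sigma(\xi_n)$ is a regular sequence in $\bbC[T^*X][s]$.

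This is exactly where your plan has a gap. You write that Coxeter arrangements are tame and strongly Euler-homogeneous and that this places $\xi_n$ among the Koszul free divisors for which the parametrized Spencer complex is a resolution, but that chain of implications is not a theorem. Tameness controls the projective dimension of logarithmic forms and strong Euler-homogeneity is a condition on Euler vector fields; neither yields the required regularity. Moreover, Koszul-freeness in the unparametrized sense (regularity of the $\sigma(\delta_i)$ alone) is strictly weaker than the strongly Koszul hypothesis that Narv\'aez-Macarro's duality theorem actually demands. The paper itself states as an open conjecture that general finite Coxeter arrangements are strongly Koszul, and proves it only in type $A$ via two explicit computations: \autoref{lem:regularity-gammas} performs Gaussian elimination on the Vandermonde-like matrix $\Gamma$ to reduce the parametrized symbols to the sequence $\bigl(\prod_{k<m}(x_m-x_k)\bigr)\partial_m$ and checks regularity by suitable changes of variables; \autoref{lem:regularity-xi} shows $\xi_n$ remains a non-zerodivisor modulo that ideal by a rank computation on $\Gamma$ modulo each linear factor $(x_i-x_j)$. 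This two-lemma computation is the actual content of the proof. Your parenthetical remark that one could ``write down the basic derivations \ldots and check Koszulity by hand'' is not an alternative route --- it \emph{is} the route, and until that check is carried out the argument has no substance at the decisive step.
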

We deduce the following.
\begin{corollary}\label{cor:symmetry-b}
  The roots of the $b$-function of $\xi_n$ are symmetric about the point $-1$.
  In other words, $b_{\xi_n}(s) = \pm b_{\xi_n}(-s-2)$.
\end{corollary}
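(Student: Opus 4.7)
The plan is to transport the Verdier duality isomorphism of Theorem~\ref{thm:d-module-dual} through the short exact sequence that computes the $b$-function. By definition, $b_{\xi_n}(s)$ is the minimal polynomial of the endomorphism ``multiplication by $s$'' on the holonomic $\calD_X[s]$-module $M := \calD_X[s]\xi_n^s / \calD_X[s]\xi_n^{s+1}$, which fits into the short exact sequence
\[
0 \to \calD_X[s]\xi_n^{s+1} \to \calD_X[s]\xi_n^s \to M \to 0.
\]

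I would first apply the (contravariant, exact) Verdier duality functor $\mathbb{D}$ to this sequence. Theorem~\ref{thm:d-module-dual} gives $\mathbb{D}(\calD_X[s]\xi_n^s) \cong \calD_X[s]\xi_n^{-s-1}$, and the analogous identification, obtained by reindexing via $t = s+1$ (so that $\calD_X[s]\xi_n^{s+1} \cong \calD_X[t]\xi_n^t$), yields $\mathbb{D}(\calD_X[s]\xi_n^{s+1}) \cong \calD_X[s]\xi_n^{-s-2}$. The dualized sequence is therefore
\[
0 \to \mathbb{D}M \to \calD_X[s]\xi_n^{-s-1} \to \calD_X[s]\xi_n^{-s-2} \to 0.
\]

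Next, I would reparametrize by setting $\sigma := -s-2$. Under this substitution the outer terms become $\calD_X[\sigma]\xi_n^{\sigma+1}$ and $\calD_X[\sigma]\xi_n^\sigma$, so that the dualized sequence takes the same form as the Verdier dual of the defining sequence for the $b$-function of $\xi_n$ (in the parameter $\sigma$). By the reflexivity of Verdier duality on holonomic modules, this identifies $\mathbb{D}M$ with $M$ at the level of $\calD_X$-modules, but with the action of $s$ on $\mathbb{D}M$ corresponding to the action of $\sigma = -s-2$ on $M$. Now I would extract the desired symmetry: since Verdier duality is $\calD_X[s]$-linear, the minimal polynomial of $s$ on $\mathbb{D}M$ equals $b_{\xi_n}(s)$; on the other hand, by the identification just made, this same minimal polynomial equals $b_{\xi_n}(-s-2)$, up to a sign coming from matching two monic polynomials of the same degree. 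Equating the two expressions gives $b_{\xi_n}(s) = \pm b_{\xi_n}(-s-2)$.

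The main obstacle is justifying the reparametrization step carefully: one must show that the map $\calD_X[s]\xi_n^{-s-1} \to \calD_X[s]\xi_n^{-s-2}$ in the dualized sequence—which is a \emph{surjection}—really corresponds, after the substitution $\sigma = -s-2$, to the structural map coming from the defining sequence for the $b$-function of $\xi_n$ (which was an \emph{inclusion}). This apparent mismatch reflects the fact that Verdier duality exchanges submodules and quotient modules, and reconciling the two descriptions requires unpacking the isomorphisms of Theorem~\ref{thm:d-module-dual} at the level of generators, so that the $\calD_X[s]$-module structure on $\mathbb{D}M$ truly matches that of $M$ under $s \leftrightarrow -s-2$.
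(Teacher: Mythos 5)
The paper does not give its own proof of this corollary: it states explicitly that the result is ``a special case of Theorem 4.1 of \cite{mac:12}'' and that ``this proof is explained in Section 4 of the above paper.'' Your proposal is therefore attempting to reconstruct an argument the paper consciously outsources, and its overall shape --- dualize the defining short exact sequence for the $b$-function and match the result, after reparametrization, against the original --- is indeed the shape of the argument in \cite{mac:12}.

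That said, two of the steps you write down are genuine gaps, not mere bookkeeping. First, the ``reindexing'' $\mathbb{D}(\calD_X[s]\xi_n^{s+1})\cong\calD_X[s]\xi_n^{-s-2}$ does not follow formally from \autoref{thm:d-module-dual}: the duality functor for $\calD_X[s]$-modules used in \cite{mac:12} carries a built-in twist of the $s$-action (roughly $s\mapsto -s$, up to a constant), so it does not simply commute with the substitution $s\mapsto s+1$; one must track this twist to produce the correct exponent $-s-2$. Second, and more seriously, the identification of $\mathbb{D}M$ with $M$ ``under $s\leftrightarrow -s-2$'' does not follow from reflexivity of $\mathbb{D}$ (which gives $\mathbb{D}\mathbb{D}M\cong M$, not $\mathbb{D}M\cong M$); you would need the dualized sequence to be isomorphic \emph{as a sequence} to the reparametrized defining one. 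But the map in your dualized sequence is a surjection $\calD_X[s]\xi_n^{-s-1}\twoheadrightarrow\calD_X[s]\xi_n^{-s-2}$, whereas the structural map in the reparametrized defining sequence is an inclusion going the other way --- and in fact $\calD_X[s]\xi_n^{-s-1}\subsetneq\calD_X[s]\xi_n^{-s-2}$ (since $\xi_n^{-s-1}=\xi_n\cdot\xi_n^{-s-2}$), so the surjection cannot be that inclusion. You flag this mismatch yourself, but resolving it is precisely the content of Section 4 of \cite{mac:12}, where the $b$-function is recast via the shift operator $t$ and a relative duality formula; it is not merely ``unpacking isomorphisms at the level of generators.'' So the high-level plan is right, but as written the proposal does not close the argument.
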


In the case of any root system, the function $\xi^2$ is $W$-invariant, and we can consider its image in $\bbC[\frakh/W]$.
That is, consider $\xi^2$ as a polynomial in coordinates given by homogeneous basic invariant polynomials $e_i$.
For clarity, denote this polynomial by $g_n$ so that $g_n(e_1,\dots,e_n) = \xi_n^2(x_1,\dots,x_n)$. 
In \cite{opd:89}, Eric Opdam found the $b$-function for $g_n$.   
In general, $b_{g_n}(s)$ divides $b_{\xi^2}(s)$, but usually falls far short of equality. 
Moreoever, for a general $f$, it is always true that $b_f(2s+1)b_f(2s)\mid b_{f^2}(s)$, but equality does not always hold.

In \autoref{sec:budur-hc-etc}, we are able to exploit the relationship between $\xi_n$ and $g_n$ to prove a conjecture of Budur, Musta{\c{t}}{\u{a}}, and Teitler \cite[Conjecture 1.2]{bud.mus.tei:11} for the case of reduced finite Coxeter arrangements:

\begin{theorem}
  \label{thm:n-over-d}
  Let $\frakh$ be a Cartan subalgebra of a simple complex Lie algebra $\frakg$.
  Let $\xi\in \bbC[\frakh]$ be the product of the positive roots as defined earlier.
  Let $d = \deg(f)$ and let $n = \dim(\frakh)$.
  Then $-n/d$ is always a root of the $b$-function of $\xi$.
\end{theorem}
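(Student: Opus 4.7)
The plan is to use Opdam's explicit formula for $b_{g_n}(s)$ together with the Chevalley identification $\pi^{*}(g_n) = \xi^2$, where $\pi\colon \frakh \to \frakh/W$ is the quotient map, in order to produce a lower bound for $b_\xi(s)$ that contains the factor $(s + n/d)$.

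I would proceed in three steps. First, I would identify within Opdam's formula a specific factor of $b_{g_n}(s)$ with root $-n/(2d) = -1/h$, where $h = 2d/n$ is the Coxeter number; this root reflects the weighted homogeneity of $g_n$ on $\frakh/W$ and is the natural candidate for the largest root of $b_{g_n}(s)$. Second, using the divisibility $b_{g_n}(s) \mid b_{\xi^2}(s)$ noted in the introduction, which arises from pulling back Opdam's Bernstein-Sato functional equation via $\pi$, I would conclude that $-n/(2d)$ is a root of $b_{\xi^2}(s)$. Third, I would translate this into the statement that $-n/d$ is a root of $b_\xi(s)$.

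The third step is the main obstacle, since the general divisibility $b_\xi(2s)\,b_\xi(2s+1) \mid b_{\xi^2}(s)$ runs in the wrong direction to transfer roots of $b_{\xi^2}$ back to $b_\xi$. My strategy is to refine the Chevalley pullback itself: writing Opdam's operator as $Q \in \calD_{\frakh/W}[s]$, its pullback to $\calD_{\frakh}[s]$ involves negative powers of $\xi$ from the inverse Chevalley Jacobian, yielding a relation of the form $R \cdot \xi^{2s+k} = b_{g_n}(s)\,\xi^{2s}$ in $\calD_\frakh[s]$ for some $R$ and small $k$. Combining this relation with the $W$-anti-invariance of $\xi$, so that multiplication by $\xi^{-1}$ becomes meaningful after restricting to the appropriate $W$-isotypic component, should yield a genuine shift-by-one Bernstein-Sato equation for $\xi^s$ whose $b$-polynomial contains $(s + n/d)$. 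Any spurious positive candidate root, such as $-n/d + 1$ (which is strictly positive for $d > n$, i.e.\ for irreducible root systems of rank at least two), is ruled out by Kashiwara's theorem that roots of $b$-functions are negative rationals, leaving $-n/d$ as a confirmed root of $b_\xi(s)$. The rank-one case $A_1$ is trivial since then $-n/d = -1$ is always a root of a Bernstein-Sato polynomial.
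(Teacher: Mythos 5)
There is a genuine gap, rooted in a misreading of Opdam's formula. From
\[
  b_g(s) = \prod_{i=1}^n\prod_{j=1}^{d_i-1}\left( s+\frac{1}{2} + \frac{j}{d_i}\right),
\]
the largest root of $b_g$ is $-\bigl(\tfrac{1}{2} + \tfrac{1}{d_n}\bigr) = -\bigl(\tfrac12 + \tfrac{n}{2d}\bigr)$, not $-\tfrac{n}{2d} = -1/h$ as you assert; in fact every root of $b_g$ is $\leq -1/2$, whereas $-1/h > -1/2$ for rank $\geq 2$. This is not a small slip, because it breaks your endgame. With the correct root fed through $b_{\xi^2}(s)\mid b_{\xi}(2s)b_{\xi}(2s+1)$ (note the divisibility runs this way, not the way you wrote it), the two candidate roots of $b_\xi$ become $-n/d$ and $-1-n/d$, which are \emph{both} negative. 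Kashiwara's negativity theorem therefore rules out neither, so your disambiguation step collapses.

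The paper's proof resolves exactly this ambiguity, and does so with two ingredients your sketch does not supply. First, via the Harish-Chandra homomorphism (conjugated radial-part map), it proves $b_{\xi^2}(s) = b_\Delta(s-1/2)$ and hence a second divisibility $b_g(s)\mid b_\xi(2s+1)b_\xi(2s+2)$, shifted by one relative to $b_g(s)\mid b_\xi(2s)b_\xi(2s+1)$. Second, it invokes M.~Saito's bound that any two roots of the $b$-function of a hyperplane arrangement differ by less than $2$: if a root $c$ of $b_g$ divided neither side into $b_\xi(2s+1)$, it would force roots of $b_\xi$ at both $2c$ and $2c+2$, contradicting Saito. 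This pins down $b_g(s)\mid b_\xi(2s+1)$, which is precisely what turns the root $-\bigl(\tfrac12 + \tfrac{n}{2d}\bigr)$ of $b_g$ into the root $-n/d$ of $b_\xi$. Your alternative idea — pulling Opdam's operator back directly, controlling the $\xi^{-1}$ factor via $W$-anti-invariance, and extracting a shift-by-one equation — is morally close to what the Harish-Chandra homomorphism formalizes, but as written it is not a proof: you have neither verified that the resulting operator lies in $\calD_\frakh[s]$ nor shown it produces a functional equation with the claimed $b$-polynomial, and you have not replaced the Saito bound with anything that eliminates the second candidate root.
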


Budur, Musta{\c{t}}{\u{a}}, and Teitler prove a conjecture called the Weak Monodromy Conjecture \cite[Theorem 1.3(a)]{bud.mus.tei:11}, which shows that poles of zeta functions correspond to eigenvalues of Milnor monodromy. 
They further state the Strong Monodromy Conjecture, which claims that roots of $b$-functions correspond to eigenvalues of Milnor monodromy, and which implies the Weak Monodromy Conjecture. 
In \cite[Theorem 1.3(b)]{bud.mus.tei:11}, they reduce the strong conjecture to the so-called $n/d$ conjecture.  
Our result, \autoref{thm:n-over-d}, is the $n/d$ conjecture in the case of reduced finite Coxeter arrangements, and thus proves the Strong Monodromy Conjecture in this case.


\subsection*{Acknowledgments}
We are extremely grateful to Nero Budur for suggesting part of this problem and sharing some of his ideas with us.
We are grateful to Uli Walther for sending us his recent draft \cite{wal:15} about related problems.
We are indebted to our advisor Victor Ginzburg for his constant support, ideas, and enthusiasm.
The second author thanks Andr{\'a}s~L{\H{o}}rincz for suggesting the proof of \autoref{prop:lorincz}.
We also thank A.~Beilinson, A.~Deopurkar, M.~Emerton, M.~Musta{\c{t}}{\u{a}}, G.~Williamson, and D.~Ben-Zvi for helpful conversations.

\section{Relationship to \texorpdfstring{$b$}{b}-functions for \texorpdfstring{$\bbC[\frakg]$}{C[g]} }
\label{sec:budur-hc-etc}


Let $\frakg$, $\frakh$, $W$, and $\xi$ be as defined in the introduction.
By the Chevalley-Shephard-Todd theorem, $\frakh/W$ is an $n$-dimensional affine space (where $n = \rk(G) = \dim(\frakh)$), and hence $\bbC[\frakh/W]$ is a polynomial ring in $n$ variables.
Fix a homogeneous free set of generators for this polynomial ring, so that $\bbC[\frakh/W] = \bbC[e_1,\dots, e_n]$.
From now on, we write $\bbC[\frakh/W]$ to mean polynomials in the generators $\{e_1,\dots,e_n\}$, and $\bbC[\frakh]^W$ to mean polynomials in the generators of $\bbC[\frakh]$.
The polynomial $\xi^2$ is $W$-invariant, and so we let $g$ be the corresponding polynomial in $\bbC[\frakh/W]$.

In the paper \cite{opd:89}, Opdam computed the $b$-function of $g$.
We prove the following relationship between the $b$-functions of $g$ and of $\xi$.
\begin{theorem}
  \label{thm:budur}
  The function $b_g(s)$ divides the function $b_{\xi}(2s+1)$.
\end{theorem}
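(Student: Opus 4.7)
The plan is to derive a functional equation for $g$ over $\frakh/W$ directly from the $b$-function equation for $\xi$, by substituting $s\mapsto 2s+1$ and then eliminating a spurious factor of $\xi$ using the sign character of $W$. Let $Q(s) \in \calD_\frakh[s]$ satisfy $Q(s)\xi^{s+1} = b_\xi(s)\xi^s$. Substituting $s\mapsto 2s+1$ and using $\xi^2 = g$ yields
\[
Q(2s+1)\,g^{s+1} = b_\xi(2s+1)\,\xi\,g^s.
\]
The right-hand side is anti-invariant under $W$ (since $g^s$ is $W$-invariant while $\xi$ is sign-equivariant), so I would next replace $Q(2s+1)$ by its sign-antisymmetrization
\[
\tilde Q(2s+1) \;:=\; \frac{1}{|W|}\sum_{w\in W}\mathrm{sgn}(w)\,w\,Q(2s+1)\,w^{-1},
\]
which satisfies $w\tilde Q(2s+1) w^{-1} = \mathrm{sgn}(w)\tilde Q(2s+1)$ and still obeys the same equation, since $g^{s+1}$ is $W$-invariant and $\xi g^s$ is sign-equivariant.

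The key step is then to consider $P(s) := \xi^{-1}\tilde Q(2s+1)$. As a product of two sign-equivariant objects it is $W$-invariant, so under the étale Galois cover isomorphism $\calD_{\frakh^\mathrm{reg}}^W \cong \calD_{(\frakh/W)^\mathrm{reg}}$ it descends to $\calD_{(\frakh/W)^\mathrm{reg}}[s]$. Because $\tilde Q(2s+1)$ carries $\bbC[\frakh]^W$ into the sign isotypic component $\xi\cdot\bbC[\frakh]^W$, the operator $P(s)$ actually preserves $\bbC[\frakh/W]$; an inductive extraction of its coefficients from its action on $1,\, e_i,\, e_i e_j,\,\ldots$ then shows $P(s) \in \calD_{\frakh/W}[s]$, with no poles along the discriminant. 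Substituting back gives $P(s)\,g^{s+1} = b_\xi(2s+1)\,g^s$, and the minimality of $b_g$ yields the divisibility $b_g(s)\mid b_\xi(2s+1)$.

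The main obstacle I anticipate is this last descent: verifying that $P(s)$, a priori only an operator on $(\frakh/W)^\mathrm{reg}$, extends without poles to all of $\frakh/W$. This relies on the fact that $\xi$ freely generates the sign isotypic component of $\bbC[\frakh]$ as a module over $\bbC[\frakh]^W$ (so that dividing by $\xi$ kills no genuine polynomial information), combined with the étale-descent isomorphism and a careful coefficient analysis to rule out spurious poles along the ramification locus.
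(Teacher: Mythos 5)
Your argument is correct, and it is a genuinely different (and cleaner) route than the paper's. The paper's proof works indirectly: it establishes $b_g(s)\mid b_{\xi^2}(s)$ and $b_g(s)\mid b_\Delta(s)$ by descending $W$-invariant (resp.\ $G$-invariant) Bernstein operators, uses the Harish-Chandra homomorphism together with its surjectivity (Wallach, Levasseur--Stafford) to derive $b_{\xi^2}(s+1/2) = b_\Delta(s)$, combines these with the elementary bound $b_{\xi^2}(s)\mid b_\xi(2s)b_\xi(2s+1)$ to get that $b_g(s)$ divides both $b_\xi(2s)b_\xi(2s+1)$ and $b_\xi(2s+1)b_\xi(2s+2)$, and finally invokes Saito's theorem (roots of the $b$-function of a hyperplane arrangement are pairwise within distance $<2$) to rule out the ``wrong'' factors and isolate $b_\xi(2s+1)$. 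Your approach instead produces the Bernstein operator for $g$ directly: anti-symmetrize a Bernstein operator for $\xi$ evaluated at $2s+1$, divide by $\xi$, and descend. This avoids both the Harish-Chandra machinery and Saito's bound, and therefore would extend verbatim to all finite Coxeter groups (the paper's Harish-Chandra step is inherently Lie-theoretic, so restricts to Weyl groups). The one point you should state more crisply is the descent of $P(s)=\xi^{-1}\tilde Q(2s+1)$ to a polynomial operator: since $\xi^{-1}$ commutes with multiplication by any function, $[P,f]=\xi^{-1}[\tilde Q,f]$, so iterated commutators of $P$ with elements of $\bbC[\frakh]^W$ vanish after $\mathrm{ord}(\tilde Q)+1$ steps; thus $P$ restricted to $\bbC[\frakh]^W\cong\bbC[\frakh/W]$ satisfies the Grothendieck order condition, and since $\frakh/W$ is smooth affine (Chevalley--Shephard--Todd) this gives a genuine element of $\calD(\frakh/W)[s]$ --- the ``inductive extraction from the action on $1,e_i,e_ie_j,\dots$'' you gesture at works, but the commutator argument is the standard clean way to rule out poles along the discriminant. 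With that clarified, the proof is complete and the minimality of $b_g$ gives $b_g(s)\mid b_\xi(2s+1)$.
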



\begin{proof}
  The inclusion map $\frakh\hookrightarrow \frakg$ induces a restriction map $\rho\colon \bbC[\frakg]^G\to \bbC[\frakh]^W$, which is an isomorphism by the Chevalley restriction theorem.
  Let $\Delta = \rho^*(\xi^2)$, which is an element of $\bbC[\frakg]^G$.

  Let $L_{\xi^2}(s)\in D(\frakh)[s]$ be an operator that satisfies $L_{\xi^2}(s)(\xi^{2(s+1)}) = b_{\xi^2}(s)\cdot (\xi^2)^s$. 
  Since $\xi^2$ is $W$-invariant, we may assume (by averaging) that $L_{\xi^2}(s)\in D(\frakh)^W[s]$.

  The space $D(\frakh)^W$ of $W$-invariant operators acts on $\bbC[\frakh/W]$, by pulling back via the isomorphism $\bbC[\frakh/W]\cong \bbC[\frakh]^W$.
  For any $L\in D(\frakh)^W$, let $\varphi(L)$ be the corresponding differential operator in $D(\frakh/W)$.
  Clearly, $\varphi$ extends to a map $\varphi\colon D(\frakh)^W[s]\to D(\frakh/W)[s]$.
  Applying $\varphi$ to $L_{\xi^2}(s)$, we see that $\varphi(L_{\xi^2}(s))(g^{s+1}) = b_{\xi^2}(s)\cdot g^s$.

  This equation shows that the minimal $b$-function of $g$ divides $b_{\xi^2}(s)$, that is,
  \begin{equation}
    \label{eq:g-xi-n}
    b_g(s)\mid b_{\xi^2}(s).
  \end{equation}
  
  Similarly, we have a map $D(\frakg)^G[s]\to D(\frakg\sslash G)[s]$.
  Let $L_\Delta(s)$ be an operator that satisfies $L_\Delta(s)(\Delta^{s+1}) = b_\Delta(s)\cdot \Delta^s$.
  Since the action of $G$ on $D(\frakg)[s]$ is locally finite, we may assume by averaging that $L_\Delta(s)\in D(\frakg)^G[s]$.
  By a similar argument as above for the quotient $\frakg\to \frakg\sslash G$ instead of $\frakh\to \frakh/W$, we see that
  \begin{equation}
    \label{eq:g-delta}
    b_g(s)\mid b_\Delta(s).
  \end{equation}
  
  Let $L_{\xi}(s)\in D(\frakh)$ such that $L_{\xi}(s)(\xi^{s+1}) = b_{\xi}(s)\cdot \xi^s$.
  Observe that
  \[
  L_{\xi}(2s)L_{\xi}(2s+1)(\xi^{2(s+1)}) = b_{\xi}(2s)b_{\xi}(2s+1)\cdot (\xi^2)^s.
  \]
  Therefore the minimal $b$-function of $\xi^2$ divides $b_{\xi}(2s)b_{\xi}(2s+1)$, that is,
  \begin{equation}
    \label{eq:xi-n-and-square}
    b_{\xi^2}(s)\mid b_{\xi}(2s)b_{\xi}(2s+1).
  \end{equation}
  From \eqref{eq:g-xi-n} and \eqref{eq:xi-n-and-square}, we see that
  \begin{equation}
    \label{eq:g-and-xi-1}
    b_g(s) \mid b_{\xi}(2s)b_{\xi}(2s+1).
  \end{equation}

  We use the following theorem from Section 4 of \cite{har:64}.
  \begin{proposition}[Harish-Chandra]
    There is a homomorphism of algebras $\hc\colon D(\frakg)^G\to D(\frakh)^W$, called the \emph{Harish-Chandra homomorphism}.
  \end{proposition}
  
  Clearly, $\hc$ extends to a map $\hc\colon D(\frakg)^G[s]\to D(\frakh)^W[s]$.
  Recall that $L_\Delta(s)$ is in $D(\frakg)^G[s]$, and was chosen such that $L_\Delta(s)(\Delta^{s+1}) = b_\Delta(s)\cdot \Delta^s$.
  As explained in \cite{har:64}, the map $HC$ is the conjugation by $\xi$ of the radial part map $\rad$.
    Since $\Delta$ corresponds to the function $\xi^2$ under the Chevalley restriction map, we have
  \begin{align*}
    \hc(L_\Delta(s))\cdot (\xi^2)^{s+1} &= \xi \circ \rad(L_\Delta(s))\circ \xi^{-1}(\xi^2)^{s+1}\\
                                        &= \xi \circ \rad(L_\Delta(s))(\xi^2)^{(2s+1)/2}\\
                                        &= \xi \cdot b_{\Delta}(s-1/2)\cdot (\xi^2)^{(2s-1)/2}\\
                                        &= b_{\Delta}(s-1/2)\cdot \xi^{2s},
  \end{align*}
  which shows that $b_{\xi^2}(s)\mid b_{\Delta}(s-1/2)$.

  Results of \cite{wal:93} and \cite{lev.sta:95} show that $\hc$ is surjective, and hence $L_{\xi^2}(s)\in D(\frakh)^W$ can be lifted to an operator in $D(\frakg)^G$.
  By running the previous argument in the reverse direction, we can see that $b_\Delta(s-1/2)\mid b_{\xi^2}(s)$.
  We conclude that $b_{\xi^2}(s) = b_\Delta(s - 1/2)$, and by changing variables that
  \begin{equation}
    \label{eq:xi2-and-delta}
    b_{\xi^2}(s + 1/2) = b_\Delta(s).
  \end{equation}

  From \eqref{eq:g-delta}, \eqref{eq:xi-n-and-square}, and \eqref{eq:xi2-and-delta}, we see that
  \begin{equation}
    \label{eq:g-and-xi-2}
    b_g(s)\mid b_\xi(2s+1)b_\xi(2s+2).
  \end{equation}

  Suppose that $b_g(s)\nmid b_\xi(2s+1)$.
  This means that there is some $c$ that is a root of $b_g(s)$ of some multiplicity $m$, but is a root of $b_\xi(2s+1)$ of multiplicity $k < m$ (where $k$ may be zero).
  By \eqref{eq:g-and-xi-1}, $c$ must be a root of $b_\xi(2s)$, and by \eqref{eq:g-and-xi-2}, $c$ must be a root of $b_\xi(2s+2)$.
  
  By \cite[Theorem 1]{sai:06}, the difference between any two roots of the $b$-function of $f$, a hyperplane arrangement, is less than $2$.
  So $c$ cannot be a root of both $b_\xi(2s)$ and $b_\xi(2s+2)$, and we have a contradiction.
  This argument proves that $b_g(s)\mid b_\xi(2s+1)$.
  This argument proves that $b_g(s)\mid b_\xi(2s+1)$.
\end{proof}

The proof of the $n/d$ conjecture for finite Coxeter arrangements now follows quite easily.
\begin{proof}[Proof of \autoref{thm:n-over-d}]
  Let $d_1\leq \dots \leq d_n$ be a list of the degrees of the fundamental invariants of the Lie group $G$.
  The degree of the highest fundamental invariant is equal to the Coxeter number.
  Recall that $n$ is the rank of the root system, and the total number of roots equals $2d$.
  It is known (see, e.g., \cite[Section 3.18]{hum:90}) that $d_n\cdot n = 2d$.

  From \cite{opd:89}, we know that
  \[
  b_g(s) = \prod_{i=1}^n\prod_{j=1}^{d_i-1}\left( s+\frac{1}{2} + \frac{j}{d_i}\right).
  \]
  Notice that one of the factors above is
  \[
  \left(s+\frac{1}{2}+\frac{1}{d_n}\right) = \left(s + \frac{1}{2} + \frac{n}{2d}\right).
  \]
  So $-(1/2 + n/(2d))$ is a root of $b_g(s)$ and hence of $b_\xi(2s+1)$, which precisely means that $b_\xi(-n/d) = 0$.
\end{proof}

\section{Symmetry of the roots of the \texorpdfstring{$b$}{b}-function}
In this section we discuss the proofs of \autoref{thm:d-module-dual} and \autoref{cor:symmetry-b}, namely the duality of the $D$-modules corresponding to $\xi_n^s$ and $\xi_n^{-s-1}$ in type $A_n$, and the symmetry of the roots of Bernstein-Sato polynomial of $\xi_n$ around the point $-1$.
\autoref{cor:symmetry-b} is a special case of Theorem 4.1 of \cite{mac:12}.
It follows quite easily from \autoref{thm:d-module-dual}, and this proof is explained in Section 4 of the above paper.
It remains to prove \autoref{thm:d-module-dual}, and we begin by recalling some definitions.
\begin{definition}[K.~Saito, \cite{sai:80}]
  A divisor $F$ on a space $X$ is called a \emph{free divisor} if the logarithmic vector fields $\der_{\bbC}(-\log F)$ form a locally free $\calO_X$-module.
\end{definition}

Let $X$ be an affine space.
Under the order filtration on $\calD_X$, we have $\gr\calD_X \cong \bbC[T^*X]$.
Recall that for any $L\in \calD_X[s]$ of order $k$, its \emph{principal symbol}, denoted $\sigma(L)$, is its image in the $k$th graded piece of $\bbC[T^*X][s]$.
\begin{definition}
  Let $F$ be a free divisor in an affine space $X$ and let $\delta_1,\dots,\delta_n$ be a basis of $\der_{\bbC}(-\log F)$. 
  Let $h\in \calO_{X,p}$ be a reduced equation of $F$, and moreover suppose that $\delta_i(h) = \alpha_ih$.
  Then $F$ is called \emph{strongly Koszul} if the sequence of principal symbols $\sigma(\delta_1-\alpha_1s), \dots, \sigma(\delta_n-\alpha_ns), \sigma(h)$ is a regular sequence in $\bbC[T^*X][s]$.
\end{definition}

The statements of Proposition 2.1, Proposition 2.2, and Theorem 4.1 from the paper \cite{mac:12} together imply the following corollary.
\begin{corollary}[From \cite{mac:12}]
  \label{cor:macarro-summary}
  Let $h$ be non-constant reduced germ of a holomorphic function such that the divisor $V(h)$ is a strongly Koszul free divisor.
  Then the $\calD_X[s]$-module $\calD_X[s]h^s$ is isomorphic to the Verdier dual of the $\calD$-module $\calD_X[s]h^{-s-1}$.
\end{corollary}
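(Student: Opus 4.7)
Since the corollary is presented as a direct consequence of Proposition~2.1, Proposition~2.2, and Theorem~4.1 of \cite{mac:12}, my plan is to combine these three cited results rather than prove them from scratch. The proof should amount to unwinding the strongly Koszul hypothesis and then carefully chaining the three citations, with care taken to track the relevant shifts in $s$.

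First I would make the strongly Koszul data explicit. By definition we have logarithmic derivations $\delta_1,\ldots,\delta_n$ and scalars $\alpha_i$ with $\delta_i(h) = \alpha_i h$ such that $\sigma(\delta_1 - \alpha_1 s), \ldots, \sigma(\delta_n - \alpha_n s), \sigma(h)$ is a regular sequence in $\bbC[T^*X][s]$. Proposition~2.1 of \cite{mac:12} should then use this regularity to present $\calD_X[s] h^s$ as the outcome of a Koszul-type free resolution built from the operators $\delta_i - \alpha_i s$ (and perhaps $h$). Proposition~2.2 of \cite{mac:12} should give the analogous presentation of $\calD_X[s] h^{-s-1}$, obtained by substituting $-s-1$ for $s$, thus producing the operators $\delta_i + \alpha_i(s+1)$; the regularity hypothesis carries over under this substitution because the symbols are unaffected by the constant shift.

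Theorem~4.1 of \cite{mac:12} should then identify the Verdier dual of the first presentation with the second. The underlying mechanism is that the dual of a Koszul complex is again a Koszul complex on the same generators, with differentials appropriately transposed, combined with the standard description of Verdier duality for holonomic $\calD_X[s]$-modules via $\mor_{\calD_X[s]}(-,\calD_X[s])$ and the side-changing functor.

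The main point requiring care---and therefore the most likely place for bookkeeping errors---is tracking the shift $s \mapsto -s-1$ that appears on passage to the dual: it should emerge naturally from dualizing the $s$-dependent Koszul differentials $\delta_i - \alpha_i s$ (since their transposes involve $-\delta_i$ and pick up an $\alpha_i$ from integration by parts), but making sure the sign and shift conventions of the cited results line up with the conventions in our paper is the only real content of the proof.
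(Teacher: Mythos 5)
Your approach matches the paper's exactly: the paper supplies no independent argument here, but simply observes that Propositions~2.1 and 2.2 and Theorem~4.1 of \cite{mac:12} together imply the statement, which is precisely your plan of chaining those three citations. Your speculative reconstruction of what those results say (Koszul-type presentations of $\calD_X[s]h^s$ and its dual, with the $s\mapsto -s-1$ shift emerging from transposition) is a reasonable guess and need not match \cite{mac:12} verbatim, but since the paper itself offers nothing beyond the citation, your proposal is the paper's proof.
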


Hence to prove \autoref{thm:d-module-dual}, it is sufficient to show that $V(\xi)$ is a strongly Koszul free divisor. 
The following proposition is known.
\begin{proposition}[Proposition 2 of \cite{ter:80}]
  \label{thm:coxeter-arrangements}
  Let $F\subset X \cong \bbC^n$ be any finite Coxeter arrangement, with corresponding Coxeter group $W$.
  Let $\underline{x} = (x_1,\dots,x_n)$ be coordinates on $X$.
  Let $\{e_1,\dots, e_n\}$ be a set of homogeneous free polynomial generators of the ring of invariants $\bbC[X]^W$.
  \begin{enumerate}
  \item The hyperplane arrangement $F$ is a free divisor.
  \item For each $i$, the gradient vector field
    \[
    \delta_i = \sum_{j=1}^n\frac{de_i}{dx_j}\partial_j
    \]
    is a logarithmic vector field.
  \item The vector fields $\delta_1,\dots,\delta_n$ form a free $\calO_X$-basis of $\der_{\bbC}(-\log F)$.
  \end{enumerate}
\end{proposition}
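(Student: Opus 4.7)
The plan is to address part (2) directly using the $W$-invariance of the $e_i$, and then deduce (1) and (3) together via Saito's criterion for free divisors. The natural reformulation of (2) is that $\delta_i \in \der_\bbC(-\log F)$ if and only if $\alpha \mid \delta_i(\alpha)$ for every positive root $\alpha$. Indeed, the Leibniz rule applied to $\xi = \prod_{\alpha \in R^+} \alpha$ gives $\delta_i(\xi) = \sum_\alpha (\prod_{\beta \neq \alpha} \beta)\, \delta_i(\alpha)$, and since the roots are pairwise coprime linear forms, $\xi \mid \delta_i(\xi)$ is equivalent to $\alpha \mid \delta_i(\alpha)$ for each $\alpha$.

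To verify this divisibility, I would use that $e_i$ is fixed by the reflection $s_\alpha$, which forces the gradient $\nabla e_i$ at any point of the fixed hyperplane $V(\alpha)$ to be tangent to $V(\alpha)$, i.e.\ orthogonal to the normal vector $\nabla \alpha$. Since $\delta_i(\alpha) = \sum_j (\partial e_i/\partial x_j)(\partial \alpha/\partial x_j) = \langle \nabla e_i, \nabla \alpha \rangle$, this orthogonality is precisely the vanishing of $\delta_i(\alpha)$ on $V(\alpha)$, giving $\alpha \mid \delta_i(\alpha)$ and completing (2).

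For (1) and (3), I would invoke Saito's criterion: $n$ logarithmic vector fields $\delta_1, \ldots, \delta_n$ form a free $\calO_X$-basis of $\der_\bbC(-\log F)$ (and thus witness the freeness of $F$) if and only if the determinant of their coefficient matrix is, up to a nonzero scalar, a reduced defining equation of $F$. In our case the coefficient matrix is $(\partial e_i/\partial x_j)$, so the relevant determinant is the Jacobian of the quotient map $\frakh \to \frakh/W$ given by the basic invariants. The main obstacle is to identify this Jacobian with $\xi$ up to a nonzero scalar; I would handle this by a degree count using the classical identity $\sum_i (d_i - 1) = |R^+|$ for finite Coxeter groups, together with the observation that the Jacobian vanishes along the ramification divisor of the quotient, which is exactly $V(\xi)$. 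Since $\xi$ is reduced and both polynomials have the same degree, they agree up to a nonzero constant, and the proof is complete.
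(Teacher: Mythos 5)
The paper does not prove this proposition; it cites it directly as Proposition 2 of \cite{ter:80}, so there is no proof of record to compare against. Your argument is correct and follows the standard route (which is essentially Terao's): establish (2) by reducing to $\alpha \mid \delta_i(\alpha)$ for each root, then get (1) and (3) from Saito's criterion together with the identification $\det(\partial e_i/\partial x_j) = c\,\xi$.

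Two small implicit assumptions are worth surfacing. First, the tangency argument for $\nabla e_i$ along $V(\alpha)$ uses the chain-rule identity $\nabla e_i(s_\alpha x) = s_\alpha \nabla e_i(x)$, which requires $s_\alpha$ to be orthogonal; so one must choose coordinates $x_1,\dots,x_n$ in which $W$ acts by orthogonal transformations (always possible for a finite Coxeter group, and automatic in the type-$A$ realization on $\bbC^n$). Second, in the Saito-criterion step you conclude the Jacobian agrees with $\xi$ up to a \emph{nonzero} constant; the degree count and divisibility by $\xi$ give $\det(\partial e_i/\partial x_j) = c\,\xi$, but $c \neq 0$ needs the extra observation that the $e_i$ are algebraically independent (being free polynomial generators of $\bbC[X]^W$), hence their Jacobian is not identically zero. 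With those two points made explicit, the proof is complete.
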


We conjecture the following, from which \autoref{thm:d-module-dual} would follow for all finite Coxeter arrangements.
\begin{conjecture}
  Let $F\subset \bbC^n$ be a finite Coxeter arrangement.
  Then $F$ is a strongly Koszul free divisor.
\end{conjecture}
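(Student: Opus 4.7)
The plan is to verify strong Koszulness by proving that the common zero locus of the principal symbols has the expected codimension. By \autoref{thm:coxeter-arrangements}, the gradient fields $\delta_i = \sum_j (\partial_j e_i)\,\partial_j$ of a choice of basic invariants $e_1,\dots,e_n$ form a free basis of $\der_{\bbC}(-\log F)$. Because each $\delta_i$ is $W$-invariant and $\xi$ is $W$-anti-invariant, one has $\delta_i(\xi) = \alpha_i \xi$ with $\alpha_i \in \bbC[\mathfrak{h}]^W$ homogeneous of degree $d_i - 2$, where $d_i = \deg e_i$. In coordinates $(x,y,s)$ on $T^*X \times \bbA^1_s$, the relevant principal symbols are
\[
L_i(x,y) - \alpha_i(x)\, s \quad (i = 1, \dots, n) \quad \text{and} \quad \xi(x),
\]
where $L_i(x,y) = \sum_j (\partial_j e_i)\, y_j$. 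Regularity of this sequence in the Cohen--Macaulay ring $\bbC[x,y,s]$ is equivalent to the common vanishing locus
\[
Z = \{(x,y,s) \in \bbA^{2n+1} : \xi(x) = 0,\ J(x)\, y = \alpha(x)\, s\}
\]
having dimension $n$, where $J(x) = (\partial_j e_i)_{ij}$ and $\alpha(x) = (\alpha_i(x))_i$.

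To bound $\dim Z$ I would stratify $V(\xi)$ into locally closed subsets $X_I = \{x \in \mathfrak{h} : \mathrm{Stab}_W(x) = W_I\}$ indexed by conjugacy classes of parabolic subgroups $W_I \leq W$. A $W_I$-averaging argument applied to the $W$-invariance of each $e_i$ shows that the mirror part $\mathfrak{h}_{W_I} \subset \mathfrak{h}$ (of dimension equal to the codimension $c_I$ of $X_I$) lies in $\ker J(x)$ for every $x \in X_I$, and this containment is an equality at a generic point of $X_I$ because the composite $\mathfrak{h}^{W_I} \hookrightarrow \mathfrak{h} \to \mathfrak{h}/W$ has image of dimension $n - c_I$. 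Consequently $\mathrm{rank}\, J(x) = n - c_I$ generically on $X_I$; the fiber of $Z \to V(\xi)$ over such $x$ has dimension $n + 1 - \mathrm{rank}[J(x) \mid \alpha(x)]$, and the contribution of $\overline{X_I}$ to $\dim Z$ is bounded by $(n - c_I) + (n + 1 - \mathrm{rank}[J(x) \mid \alpha(x)])$. The desired inequality $\dim Z \leq n$ then reduces to the transversality
\[
\alpha(x) \notin \mathrm{image}(J(x)) \text{ at generic } x \in X_I,
\]
for each $I$, together with a check that no deeper substratum of $X_I$ contributes a fiber whose dimension exceeds the drop in $\dim X_I$.

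The main obstacle is establishing this transversality uniformly. A promising strategy is to proceed by induction on $|W|$, reducing the transversality at $X_I$ to the analogous strongly Koszul statement for the smaller Coxeter arrangement associated to the parabolic subsystem $(W_I, \mathfrak{h}_{W_I})$ by analyzing how $(e_i, \alpha_i, \xi)$ restricts transversally to $X_I$. A more conceptual alternative would exploit the Harish-Chandra-type identification used in the proof of \autoref{thm:budur} to pin down the radial parts of $W$-invariant operators on each stratum. The most delicate case is the origin, where every $\alpha_i$ of positive degree vanishes and only the scalar components contribute to $[J \mid \alpha]$; in type $A$, a direct calculation with power-sum invariants confirms the dimension count there, and I expect an analogous verification to succeed uniformly for the remaining types.
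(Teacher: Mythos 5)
Note that the paper itself does not prove this conjecture in general; immediately after stating it, the authors restrict to type $A_n$ and prove only that case, via explicit Gaussian elimination in the power-sum coordinates (\autoref{lem:regularity-gammas} and \autoref{lem:regularity-xi}). Your proposal takes a genuinely different, more conceptual route: since the symbols $\sigma(\delta_i - \alpha_i s)$ and $\xi$ are homogeneous under a suitable $\bbG_m$-action and $\bbC[T^*X][s]$ is Cohen--Macaulay, strong Koszulness is equivalent to the common zero locus $Z$ having dimension $n$ inside the $(2n+1)$-dimensional ambient space, and you propose to bound $\dim Z$ by stratifying $V(\xi)$ according to parabolic type and computing the rank of the $n\times(n+1)$ matrix $[J(x)\mid\alpha(x)]$ on each stratum. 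Your identification of $\ker J(x)$ with $\mathfrak{h}_{W_I}$ at a generic point of $X_I$, using the $W_I$-invariance of $de_i(x)$ together with the generic surjectivity of $\mathfrak{h}^{W_I}\to\mathfrak{h}/W$, is correct and uniform across Coxeter types, which the paper's coordinate computation is not.

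However, as you flag yourself, the argument has a genuine gap: the whole conjecture collapses onto the unproved transversality statement $\alpha(x)\notin\operatorname{image} J(x)$ at a generic point of each stratum $X_I$. Without it, one only knows $\operatorname{rank}[J(x)\mid\alpha(x)]\ge n-c_I$, giving $\dim\bigl(Z\cap(X_I\times\bbC^{n+1})\bigr)\le n+1$, which is not good enough. Your proposed remedies (induction on $|W|$ via parabolic restriction, or a Harish-Chandra radial-part analysis) are plausible directions but are not carried out, so the proposal does not establish the conjecture for any type beyond what can be checked directly; even the type-$A$ case is not actually re-derived in your framework. The paper handles type $A$ very differently: it row-reduces the matrix $\Gamma$ to an upper-triangular form whose diagonal entries $\prod_{k<m}(x_m-x_k)$ become visibly non-zerodivisors after successive linear changes of variables (\autoref{lem:regularity-gammas}), and then verifies that $\xi_n$ remains a non-zerodivisor by a full-rank computation modulo each linear factor $(x_i-x_j)$ (\autoref{lem:regularity-xi}). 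That argument sidesteps your transversality question entirely but relies on the explicit formula \eqref{eqn:alphak} for the $\alpha_k$, which has no obvious analogue for other Coxeter types. In short, your approach correctly isolates the geometric obstruction and would generalize if completed, but it leaves the crucial step open.
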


From now on, we will only work in type $A_n$, and we now prove the above conjecture for $A_n$ arrangements.
Consider the (scaled) power-sum basis of the ring of symmetric polynomials:
\[
e_i = \frac{1}{i}\cdot (x_1^i + \dots + x_n^i),
\]
for $1\leq i \leq n$.
Set $\delta_i$ to be the gradient of $e_i$, namely,
\[
\delta_i = \sum_{j=1}^n\frac{de_i}{dx_j}\partial_j= x_1^{i-1}\partial_1 + \dots + x_n^{i-1}\partial_n.
\]
Then $\{\delta_1,\dots,\delta_n\}$ is a basis for the logarithmic vector fields.

By \autoref{thm:coxeter-arrangements}, each $\delta_i$ is a logarithmic vector field.
So we have $\delta_i(\xi^s) = s\alpha_i\xi^s$,
where $\alpha_i$ is some polynomial in the variables $\{x_1,\dots,x_n\}$.
In fact, direct computation shows that 
\begin{equation}\label{eqn:alphak}
\alpha_k = \sum_{i<j}\left(\frac{x_i^{k-1}-x_j^{k-1}}{x_i-x_j}\right).
\end{equation}
For each $i$, we have $(\delta_i - s\alpha_i)\in \ann(f^s)$.
Set $\gamma_i = \sigma(\delta_i - s\alpha_i)$ in $\bbC[T^*X]$ to be its principal symbol.

The next two lemmas contain most of the work of the proof of \autoref{thm:d-module-dual}.
\begin{lemma}
  \label{lem:regularity-gammas}
  The sequence $(\gamma_1,\dots, \gamma_n)$ is regular in $\bbC[T^*X][s]$.
\end{lemma}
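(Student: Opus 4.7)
My plan is to translate the regularity claim into a codimension bound on the vanishing locus and verify it by direct dimension counting. Set $R = \bbC[x_1,\dots,x_n,\eta_1,\dots,\eta_n,s]$, the polynomial ring modeling $T^*X\times\spec\bbC[s]$ in coordinates where $\eta_j$ is dual to $x_j$. Since $R$ is Cohen--Macaulay of dimension $2n+1$, the sequence $(\gamma_1,\dots,\gamma_n)$ is regular if and only if the ideal it generates has height $n$, equivalently $\dim V(\gamma_1,\dots,\gamma_n)\leq n+1$. Each $\gamma_k=\sum_j x_j^{k-1}\eta_j - s\alpha_k$ is $\bbC[x]$-linear in the fiber coordinates $(\eta_1,\dots,\eta_n,s)$, so the equations package as $N(x)\cdot(\eta_1,\dots,\eta_n,s)^T=0$, where $N(x)$ is the $n\times(n+1)$ matrix whose first $n$ columns form the Vandermonde matrix $M(x)$ with $M_{kj}=x_j^{k-1}$ and whose last column is $-(\alpha_1,\dots,\alpha_n)^T$.

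Next I would project $V(\gamma_1,\dots,\gamma_n)$ onto $\bbA^n_x$ and estimate fiber dimensions via the rank of $N(x)$. The fiber over $x_0$ is $\ker N(x_0)\subset\bbA^{n+1}$, of dimension $n+1-\rk N(x_0)$, so the problem reduces to controlling where the rank of $N$ drops on $\bbA^n_x$. The essential combinatorial input is that $\rk M(x)$ equals the number of distinct values among $x_1,\dots,x_n$: distinct columns $(1,y,y^2,\dots,y^{n-1})^T$ are independent by the classical Vandermonde identity, while equal $x_i$'s yield identical columns. Consequently, the locus $Z_r=\{x:\rk M(x)\leq r\}$ is a finite union of the linear subspaces $\{x_i=x_j\text{ whenever }i,j\text{ lie in a common part of }P\}$ indexed by set partitions $P$ of $\{1,\dots,n\}$ with at most $r$ parts, and hence $\dim Z_r\leq r$. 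Since $M$ is a submatrix of $N$, we have $\rk N(x)\geq\rk M(x)$, which gives $\{x:\rk N(x)\leq r\}\subset Z_r$ for every $r$.

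Stratifying $\bbA^n_x$ by rank of $N(x)$ and assembling the estimates, the preimage of the rank-$r$ stratum in $V(\gamma_1,\dots,\gamma_n)$ has dimension at most $r+(n+1-r)=n+1$, and taking the union over $r=0,1,\dots,n$ yields $\dim V(\gamma_1,\dots,\gamma_n)\leq n+1$, the desired codimension bound. I do not anticipate a serious obstacle: the entire argument reduces to the combinatorial inequality $\dim Z_r\leq r$, which is immediate once the Vandermonde structure of $M$ is exploited, and the extra column $-\alpha$ in $N$ can only increase the rank, hence only shrink the fibers, so it can never hurt this bound. The one subtle check is that the explicit form of $\alpha_k$ given in \eqref{eqn:alphak} is indeed polynomial (so $N(x)$ is defined on all of $\bbA^n_x$ and not just on $\frakh^{\mathrm{reg}}$), which is clear because each term $(x_i^{k-1}-x_j^{k-1})/(x_i-x_j)$ is a polynomial in $x_i,x_j$.
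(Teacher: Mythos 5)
Your argument is correct, and it takes a genuinely different route from the paper's proof. The paper proceeds by explicit Gaussian elimination: it constructs an invertible lower-triangular matrix $M$ reducing the Vandermonde block to an upper-triangular $\Theta'$ with diagonal entries $\prod_{k<i}(x_i - x_k)$, introduces a term order with $\partial_1 > \cdots > \partial_n > s$ to replace the $\gamma_i'$ by their leading monomials $\Theta'_{(i,i)}\partial_i$, and then checks non-zerodivisor conditions modulo the ideal of previous terms by successive linear changes of variables. Your proof instead converts regularity into the codimension bound $\dim V(\gamma_1,\ldots,\gamma_n)\leq n+1$, valid since $\bbC[T^*X][s]$ is Cohen--Macaulay of dimension $2n+1$, and then proves this bound by stratifying the base $\bbA^n_x$ by the rank of $N(x)$ and using that $\rk N(x)\geq \rk M(x) = \#\{\text{distinct }x_i\}$, so the rank-$r$ stratum lies in the union of at most $r$-dimensional diagonal subspaces $L_P$. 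This is cleaner: it avoids the explicit inverse matrix, the term-order argument, and the case-by-case changes of variables, trading them for one soft fact (rank of the Vandermonde matrix) plus the CM criterion for regular sequences. It also has the advantage that the explicit form of $\alpha_k$ is never needed beyond knowing it is polynomial, whereas the paper computes with the $e_{i-j}(S_i)$ entries of $M$ directly. One potential benefit of your approach is that the key geometric input---rank stratification of the Jacobian matrix of basic invariants---has a natural analogue for other Coxeter groups, which could be relevant to the conjecture immediately preceding this lemma; the paper's linear-algebraic approach, by contrast, leans on type-$A$-specific formulas.
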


\begin{proof}

  We use the coordinates $(x_1,\dots, x_n,\partial_1,\dots,\partial_n)$ on $\bbC[T^*X]$.
Recall that
\[
\gamma_i = x_1^{i-1}\partial_1 + \dots + x_n^{i-1}\partial_n - \alpha_n s.
\]
We encode the data of the sequence $(\gamma_1,\ldots,\gamma_n)$ in the following $n\times (n+1)$ matrix:
\[
\Gamma =
\begin{pmatrix}
  1&\dots &1 &-\alpha_1\\
  x_1&\dots &x_n&-\alpha_2\\
  \vdots & \ddots & \vdots &\vdots\\
  x_1^{n-1}&\dots &x_n^{n-1}&-\alpha_n
\end{pmatrix}.
\]
For every $i$, we see that $\gamma_i = (\Gamma \cdot (\partial_1,\ldots,\partial_n,s)^t)_i$.

Let $\Theta$ be the matrix formed by the first $n$ columns of $\Gamma$.
We now find an invertible lower-triangular matrix $M$ such that $\Theta' = M\Theta$ is upper-triangular.

For any finite set of variables $S$ of size at most $k$, let $e_k(S)$ denote the $k$th elementary symmetric polynomial in the elements of $S$:
\[
e_k(S) = \sum_{T\subset S, |T| = k}\left( \prod_{i\in T}x_i \right).
\]
Direct computation by Gaussian elimination shows that 
\[
M =
\begin{pmatrix}
  1&0&\dots&0\\
  m_{21}&1&\dots &0\\
  \vdots &\vdots &\ddots &\vdots\\
  m_{n1}&m_{n2}&\dots &1
\end{pmatrix},\text{ where }
\begin{cases}m_{ij} = (-1)^{i+j}e_{i-j}(S_i)&\text{ for }\\S_{i} = \{x_1,\dots,x_{i-1}\}.\end{cases}
\]

For every $i$ and $j$, one finds the following:
\begin{equation}
  \label{eq:theta-computation}
  \Theta'_{(i,j)} = (M\Theta)_{(i,j)} =
  \begin{cases}
    \prod_{k < i}(x_i-x_k)& \text{if }i \leq j,\\
    0&\text{otherwise}.
  \end{cases}
\end{equation}
Let $\Gamma' = M\Gamma$, and let $\gamma_i' = (\Gamma'\cdot (\partial_1,\ldots,\partial_n)^t)_i$ for every $i$.
Since $M$ is an invertible matrix, $(\gamma_1,\dots,\gamma_n)$ is a regular sequence if and only if $(\gamma_1',\dots,\gamma_n')$ is a regular sequence.

Consider a grading on $\bbC[T^*X][s]$ in which the variables $\partial_1,\dots,\partial_n$, and $s$ have strictly decreasing weights in that order.
The sequence of principal symbols of $(\gamma_1',\dots,\gamma_n')$ under the above grading is just the following sequence formed by the diagonal elements of $\Theta$ times the corresponding $\partial$s, namely $(\Theta'_{(1,1)}\partial_1,\dots,\Theta'_{(n,n)}\partial_n)$.
If this sequence is a regular sequence, then so is the original sequence.

So we now prove that $(\Theta'_{(1,1)}\partial_1,\dots,\Theta'_{(n,n)}\partial_n)$ is a regular sequence.
The first element is just $\partial_1$, which is a non-zerodivisor.
Let $I_m$ be the ideal $(\Theta'_{(1,1)}\partial_1,\dots,\Theta'_{(m-1,m-1)}\partial_{m-1})$.
By induction, it remains to prove that for every $m > 1$, the term $\Theta'_{(m,m)}\partial_m$ is a non-zerodivisor modulo $I_m$.
Recall that
\[
\Theta'_{(m,m)}\cdot \partial_m = \left(\prod_{k < m}(x_m-x_k)\right)\partial_m.
\]
The factor $\partial_m$ of the above expression is one of the generating variables of the polynomial ring $\bbC[T^*X][s]$,
and it does not appear in any of the generators of $I_m$.
Hence it is a non-zerodivisor modulo $I_m$.

Now consider the factor $(x_m - x_{m-1})$.
We can make the following linear change of variables:
\[
z_i =
\begin{cases}
  x_{i+1} - x_i, &1 \leq i \leq (m-1),\\
  x_1 + \dots + x_m, & i = m.
\end{cases}
\]
Then $z_{m-1} = (x_m - x_{m-1})$, and it is easy to check that this variable does not appear in the generators of the ideal $I_m$.
Hence $(x_m-x_{m-1})$ is a non-zerodivisor modulo $I_m$.
For each of the other factors $(x_m - x_i)$, a suitable similar change of variables shows that the variable corresponding to $(x_m - x_i)$ does not appear in the generators of $I_m$.
  
All together, $\Theta'_{(m,m)}\partial_m$ is a non-zerodivisor modulo $I_m$, and the induction argument shows that $(\Theta'_{(1,1)}\partial_1,\dots,\Theta'_{(n,n)}\partial_n)$ is a regular sequence.  
\end{proof}

\begin{lemma}
  \label{lem:regularity-xi}
  The function $\xi_n$ is a non-zerodivisor in $\bbC[T^*X]/(\gamma_1,\dots,\gamma_n)$.
\end{lemma}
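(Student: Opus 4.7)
The plan is to convert the lemma into a geometric irreducibility statement and then reduce to a confluent Vandermonde computation. By \autoref{lem:regularity-gammas}, the quotient $R := \bbC[T^*X][s]/(\gamma_1,\dots,\gamma_n)$ is Cohen--Macaulay of dimension $n+1$, hence equidimensional and free of embedded primes. All associated primes of $R$ therefore correspond to $(n+1)$-dimensional irreducible components of $V(I)\subset\spec\bbC[T^*X][s]$, where $I=(\gamma_1,\dots,\gamma_n)$. Showing $\xi_n$ is a non-zerodivisor reduces to showing that $V(I)$ has a unique such component and that it is not contained in $V(\xi_n)$.

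Consider the projection $\pi\colon V(I)\to\bbA^n_x$. Over the open locus $U=\{x:\xi_n(x)\neq 0\}$, the $n\times n$ submatrix $V(x)=(x_j^{i-1})$ of $\Gamma$ is an invertible Vandermonde, so the fibre of $\pi$ over $x\in U$ is the line $\{(sV(x)^{-1}A(x),s):s\in\bbC\}$. Hence $\pi^{-1}(U)$ is irreducible of dimension $n+1$, and its closure $Z$ is a top-dimensional component of $V(I)$ visibly not contained in $V(\xi_n)$. Any other $(n+1)$-dimensional component must then be contained in $\pi^{-1}(V(\xi_n))$, so it suffices to prove $\dim\pi^{-1}(V(\xi_n))\leq n$.

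Stratify $V(\xi_n)$ by integer partitions: for $\lambda=(\lambda_1,\dots,\lambda_r)\vdash n$ with $r<n$, let $X_\lambda$ be the locally closed stratum of points whose coordinates take exactly $r$ distinct values $a_1,\dots,a_r$ with multiplicities $\lambda_1,\dots,\lambda_r$, so $\dim X_\lambda=r$. Writing $v(a)=(1,a,\dots,a^{n-1})^T$, the column span of $V(x)$ for $x\in X_\lambda$ equals the $r$-dimensional subspace $\text{span}\{v(a_1),\dots,v(a_r)\}$, and the fibre of $\pi$ over $x$ has dimension $n+1-\rk\Gamma(x)$. The desired inequality $\dim\pi^{-1}(X_\lambda)\leq n$ is therefore equivalent to the rank equality $\rk\Gamma(x)=r+1$, which in turn is equivalent to
\[
\vec\alpha(x) \notin \text{span}\{v(a_1),\dots,v(a_r)\}.
\]

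The confluent Vandermonde argument for this non-membership is the heart of the proof. Split the sum in \eqref{eqn:alphak} according to whether the two indices lie in the same or different blocks of the partition. Cross-block pairs from blocks $l$ and $m$ contribute $(a_l^{k-1}-a_m^{k-1})/(a_l-a_m)=(v(a_l)-v(a_m))/(a_l-a_m)$, already in $\text{span}\{v(a_l),v(a_m)\}$. Same-block pairs within block $l$ contribute $(k-1)a_l^{k-2}$, the $k$-th entry of the derivative vector $v'(a_l)=(0,1,2a_l,\dots,(n-1)a_l^{n-2})^T$. Consequently
\[
\vec\alpha(x) \;\equiv\; \sum_{l=1}^{r}\binom{\lambda_l}{2}\, v'(a_l) \pmod{\text{span}\{v(a_1),\dots,v(a_r)\}}.
\]
Since $\lambda\neq(1^n)$ some $\lambda_l\geq 2$ and $\binom{\lambda_l}{2}>0$. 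Setting $m_l=1+\mathbf{1}_{\lambda_l\geq 2}$ and $N=\sum_l m_l=r+\#\{l:\lambda_l\geq 2\}\leq n$, the top $N$ rows of the $N$-tuple of vectors $\{v(a_l)\}_{l=1}^r\cup\{v'(a_l):\lambda_l\geq 2\}$ form a square confluent Vandermonde matrix on the points $a_l$ with multiplicities $m_l$. Its determinant is a nonzero multiple of $\prod_{l<m}(a_l-a_m)^{m_lm_m}$, so those $N$ vectors are linearly independent in $\bbC^n$, forcing $\sum_l\binom{\lambda_l}{2}\,v'(a_l)\notin\text{span}\{v(a_i)\}$. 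This completes the rank bound, and with it the lemma.
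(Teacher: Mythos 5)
Your proof is correct, but it takes a genuinely different route from the paper's. The paper exploits the factorization $\xi_n = \prod_{i<j}(x_i-x_j)$, reduces to showing that each linear factor $(x_i-x_j)$ is a non-zerodivisor, and for each such factor carries out a Gaussian elimination on the matrix $\overline\Gamma$ to exhibit a non-vanishing last pivot $\beta$ modulo the specialization $(x_1-x_2, x_3,\dots,x_n)$, concluding that $\Gamma$ has full rank at the generic point of the hyperplane $\{x_1 = x_2\}$. Your argument is global and dimension-theoretic: you invoke Cohen--Macaulayness of the complete intersection $J=(\gamma_1,\dots,\gamma_n)$ from \autoref{lem:regularity-gammas} to reduce the statement to the codimension estimate $\dim\bigl(V(J)\cap V(\xi_n)\bigr)\leq n<\dim V(J)$, and then you establish this stratum by stratum on $V(\xi_n)$ by computing the exact rank of $\Gamma(x)$ on each partition stratum $X_\lambda$. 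The identity $\vec\alpha(x)\equiv\sum_l\binom{\lambda_l}{2}v'(a_l)$ modulo $\operatorname{span}\{v(a_1),\dots,v(a_r)\}$, coupled with the non-vanishing of a confluent Vandermonde determinant, gives $\rk\Gamma(x)=r+1$ at every point of $X_\lambda$, not merely at a generic one. This is strictly more information than the paper extracts, and it makes fully explicit the unmixedness step that the paper leaves implicit when it passes from "full rank modulo $(x_1-x_2)$'' to "non-zerodivisor.'' One small imprecision: the phrase "equivalent to the rank equality $\rk\Gamma(x)=r+1$'' should be read as "implied by $\rk\Gamma(x)=r+1$ for all $x\in X_\lambda$,'' which is what you in fact prove; the converse direction is not needed. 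The trade-off is that your route requires the confluent Vandermonde identity, whereas the paper's is a hands-on Gaussian elimination; both are reasonable, and your approach arguably makes the geometry behind the lemma more transparent.
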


\begin{proof}
  Let $J$ be the ideal $(\gamma_1,\dots, \gamma_n)$.
  Since $\xi_n = \prod_{1 \leq i < j\leq n}(x_i-x_j)$, it suffices to show that each linear factor of $\xi_n$ is a non-zerodivisor modulo $J$.
  
  First consider the factor $(x_1-x_2)$.
  If $(x_1-x_2)$ were a zerodivisor modulo $J$, then we would have an equation in $\bbC[T^*X]$ of the following form, for some $f,f_1,\ldots,f_n$ in $\bbC[T^*X]$:
  \[
  (x_1-x_2)\cdot f = \gamma_1f_1 + \dots + \gamma_nf_n.
  \]
  Such an equation can only hold if $\Gamma$ has a non-trivial kernel modulo the ideal $(x_1-x_2)$.
  We now show that the matrix $\Gamma$ has full rank modulo $(x_1 - x_2)$, which will ensure that $(x_1-x_2)$ is a non-zerodivisor modulo $J$.
  Equivalently, we show that the matrix $\overline{\Gamma}$, obtained by deleting the first column of $\Gamma$, has full rank when we set $x_1 = x_2$.

  Recall from the definition of $\Gamma$ that
  \[
  \overline{\Gamma} =
  \begin{pmatrix}
    1&\dots &1 &-\alpha_1\\
    x_2&\dots &x_n&-\alpha_2\\
    \vdots & \ddots & \vdots &\vdots\\
    x_2^{n-1}&\dots &x_n^{n-1}&-\alpha_n
  \end{pmatrix}.
  \]
  We use Gaussian elimination to find the rank of $\overline{\Gamma}$.
  As before, we find an invertible lower-triangular matrix $\overline{M}$ such that $\overline{M}\,\overline{\Gamma}$ is upper-triangular:
  \[
  \overline{M} =
  \begin{pmatrix}
    1&0&\dots&0\\
    \overline{m}_{21}&1&\dots &0\\
    \vdots &\vdots &\ddots &\vdots\\
    \overline{m}_{n1}&\overline{m}_{n2}&\dots &1
  \end{pmatrix},
  \text{ where }
  \begin{cases}\overline{m}_{ij} = (-1)^{i+j}e_{i-j}(\overline{S}_i)&\text{ for }\\ \overline{S}_i = \{x_2,\dots,x_i\}.\end{cases}
  \]
The first $(n-1)$ diagonal entries of $\overline{M}\,\overline{\Gamma}$ are non-zero modulo $(x_1-x_2)$, by a computation similar to \eqref{eq:theta-computation}.

Let $\beta$ denote the last diagonal entry of $\overline{M}\,\overline{\Gamma}$. Then $\beta$ is the following combination of the elements $\{-\alpha_i\}$:
\[
\beta = -\alpha_n - \sum_{i=1}^{n-1}\overline{m}_{ni}\alpha_i.
\]
We consider $\beta$ modulo the ideal $(x_1 - x_2,x_3,\dots, x_n)$.
In other words, we set $x_1 = x_2$, and further set $x_3,\dots,x_n$ equal to zero.
For every $i < (n-1)$, we have $\overline{m}_{ni} \equiv 0$, because every monomial of $e_{n-i}(\lbrace x_2,\dots,x_n\rbrace)$ contains a variable that has been set to zero.
So we find that
\[
\beta \equiv-\alpha_n - \overline{m}_{n,n-1}\alpha_{n-1}.
\]
By \eqref{eqn:alphak}, when $k>1$,
\[
\alpha_k \equiv (2n+k-5) x_2^{k-2}.
\]
Therefore $\beta \equiv - x_2^{n-2}$ modulo the ideal $(x_1-x_2,x_3,\ldots,x_n)$. 

In particular, $\beta\neq 0$.
Since all diagonal entries of $\overline{M}\,\overline{\Gamma}$ are nonzero, it has rank $n$ (full rank) modulo $(x_1-x_2)$, and so $\overline{\Gamma}$ has rank $n$ modulo $(x_1-x_2)$.
Therefore $(x_1-x_2)$ is a non-zerodivisor modulo $J$.
By symmetry, all other factors of $\xi_n$ are non-zerodivisors modulo $J$, so $\xi_n$ is itself a non-zerodivisor modulo $J$.
\end{proof}

We can now put together the preceding results to prove that the $D$-modules generated by the symbols $\xi_n^s$ and $\xi_n^{-s-1}$ are dual.
\begin{proof}[Proof of \autoref{thm:d-module-dual}]
  Recall that $(\gamma_1,\dots,\gamma_n,\xi_n)$ is the sequence of principal symbols of the elements $\{(\delta_1-\alpha_1s), \dots, (\delta_n - \alpha_ns), \xi_n\}$.
  By \autoref{lem:regularity-gammas} and \autoref{lem:regularity-xi}, this is a regular sequence in $\bbC[T^*X][s]$.
  This shows that $V(\xi_n)$ is strongly Koszul, and by \autoref{thm:coxeter-arrangements}, $V(\xi_n)$ is a free divisor.
  Finally, by \autoref{cor:macarro-summary} we conclude that the dual of $\calD_X[s]\xi_n^s$ is isomorphic to $\calD_X[s]\xi_n^{-s-1}$.
\end{proof}


\section{Factors of \texorpdfstring{$b_{\xi_n}$}{b xi n}}

Recall from the introduction that $\lambda\vdash n$ denotes a partition $\lambda = (\lambda_1\geq \dots \geq \lambda_k)$ of $n$.
Also, $b_\lambda$ denotes a product of the $b$-functions of the functions $\xi_{\lambda_i}$.
In this section, we prove the following.

\begin{proposition}
  \label{thm:bnm1_divides_bn}
  Let $\lambda \vdash n$.
  Then $b_{\xi_{\lambda}}  \mid b_{\xi_{n}}$.
\end{proposition}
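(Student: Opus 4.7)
The plan is to localize the problem at a carefully chosen point $p\in X = \bbC^n$, where $\xi_n$ differs from $\xi_\lambda$ by a unit in the local ring, and then apply the standard principle that the local $b$-function at any point divides the global $b$-function. Concretely, write $\xi_n = \xi_\lambda \cdot h_\lambda$, where
\[
h_\lambda := \prod_{\substack{1\le i<j\le n\\ t(i)\ne t(j)}} (x_i-x_j),
\]
and $t(i)$ denotes the index of the block of the set partition containing $i$. Choose scalars $c_1,\dots,c_r\in\bbC$ that are pairwise distinct, and let $p\in X$ be the point with $x_i(p)=c_{t(i)}$. Then $h_\lambda(p)\ne 0$, so $h_\lambda$ is a unit in $\calO_{X,p}$.

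Using the conjugation $L\mapsto h_\lambda^{\,s}\, L\, h_\lambda^{-s-1}$, which defines a map $\calD_{X,p}[s]\to \calD_{X,p}[s]$ whenever $h_\lambda$ is a local unit, any local functional equation for $\xi_\lambda^s$ translates into a local functional equation for $\xi_n^s$ with the same $b(s)$, and conversely. This yields the equality of local $b$-functions $b_{\xi_n,p}(s) = b_{\xi_\lambda,p}(s)$. Next, after the translation $y_i=x_i-c_{t(i)}$, the point $p$ becomes the origin, and $\xi_\lambda$ is unchanged because it depends only on within-block differences, which are translation-invariant. Since $\xi_\lambda(y)$ is a homogeneous polynomial (of degree $d = \sum_t \binom{\lambda_t}{2}$), a standard weight-decomposition argument under the grading $\deg x_i = 1,\ \deg \partial_i = -1,\ \deg s = 0$ replaces any local operator realizing the functional equation by its weight-$(-d)$ component, which is automatically polynomial in $x$. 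Thus $b_{\xi_\lambda,p}(s)=b_{\xi_\lambda}(s)$; combined with the divisibility $b_{\xi_n,p}(s)\mid b_{\xi_n}(s)$, this gives $b_{\xi_\lambda} \mid b_{\xi_n}$, as claimed.

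The principal obstacle is making the two \emph{standard} identifications precise: (i) multiplying $f$ by a local unit $u$ does not change the local $b$-function, with the conjugation $u^{s}\,L\,u^{-s-1}$ producing a bona fide element of $\calD_{X,p}[s]$ despite the non-polynomial exponent, and (ii) the local $b$-function at the origin of a homogeneous polynomial coincides with its global $b$-function. Both facts are well-known, with short proofs via the Leibniz rule and the Euler operator $E = \sum x_i \partial_i$ respectively; with them in hand, the rest of the argument is essentially bookkeeping.
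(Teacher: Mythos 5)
Your proof is correct and follows essentially the same route as the paper: both localize at a point $p$ on the stratum where the cross-block factor $h_\lambda$ (which the paper calls $f$) is a unit, use conjugation by $h_\lambda^{\,s}(\cdot)h_\lambda^{-s-1}$ to transfer functional equations between $\xi_n^s$ and $\xi_\lambda^s$ in $\calD_{X,p}[s]$, and then apply the local-divides-global principle. The only organizational difference is that the paper first proves a slightly more general statement (Proposition~\ref{prop:localb}, identifying $b_{\xi_n,q}$ at every point), whereas you go directly to the chosen $p$ and supply the homogeneity/weight argument to identify $b_{\xi_\lambda,p}$ with the global $b_{\xi_\lambda}$ — a step the paper treats more tersely.
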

We first describe the local $b$-functions of $\xi_n$.

\begin{proposition} \label{prop:localb}
  Let $q = (q_1,\dots,q_n) \in \bbC^n$.
  Let $P$ be any set partition of $\{1,\ldots, n\}$ into subsets such that $i$ and $j$ are in the same subset if and only if $q_i = q_j$.
  Then $b_{\xi_n,q} = b_{\xi_P}$.
\end{proposition}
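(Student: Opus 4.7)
The plan is to reduce the local $b$-function $b_{\xi_n, q}$ to the global $b$-function of the simpler polynomial $\xi_P$, via three elementary reductions: a local factorization of $\xi_n$, invariance of local $b$-functions under multiplication by a unit, and a translation combined with a homogeneity argument.

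First I would split the defining product $\xi_n = \prod_{1 \le i < j \le n}(x_i - x_j)$ according to whether $q_i = q_j$. The factors with $q_i = q_j$ are precisely those whose indices lie in a common block of $P$, and their product is $\xi_P$; the remaining factors do not vanish at $q$, so their product is a unit $u \in \calO_{X,q}^{\times}$. Thus $\xi_n = u \cdot \xi_P$ in the local ring at $q$. Invoking the standard fact that the local $b$-function of a function is unchanged under multiplication by a local unit then yields $b_{\xi_n, q} = b_{\xi_P, q}$.

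Next I would translate by $y_i = x_i - q_i$, sending $q$ to the origin. Since $q_i = q_j$ whenever $i, j$ lie in a common block of $P$, each difference $(x_i - x_j)$ appearing in $\xi_P$ coincides with $(y_i - y_j)$; hence $\xi_P$ is literally the same polynomial in the $y$-coordinates, and $b_{\xi_P, q} = b_{\xi_P, 0}$. To close, I would use that $\xi_P = \prod_t \xi_{P_t}$ is homogeneous, so $b_{\xi_P, 0} = b_{\xi_P}$. The quickest justification: given a local operator $L(s)$ satisfying $L(s)\xi_P^{s+1} = b_{\xi_P, 0}(s)\xi_P^s$, decompose $L(s) = \sum_k L_k(s)$ into its Euler-homogeneous components. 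Since $E \xi_P^s = s \deg \xi_P \cdot \xi_P^s$ for the Euler field $E = \sum_i y_i \partial_i$, matching Euler weights on both sides of the functional equation forces all but the component $L_{-\deg \xi_P}(s)$ to annihilate $\xi_P^{s+1}$; this single surviving component is a globally defined differential operator in $\calD_X[s]$ realizing the same equation. Hence $b_{\xi_P} \mid b_{\xi_P, 0}$, and the reverse divisibility is automatic. Combining the three reductions yields $b_{\xi_n, q} = b_{\xi_P}$.

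The most delicate step is the final homogeneity reduction $b_{\xi_P, 0} = b_{\xi_P}$. The Euler-grading extraction of the relevant component of $L(s)$ is conceptually standard, but the bookkeeping requires care: one has to verify both that $L(s)$ decomposes cleanly into Euler-homogeneous components (this is unproblematic once one notes that only finitely many Euler weights can contribute to a single functional equation of bounded order) and that the extracted component is truly a polynomial differential operator. The earlier unit-invariance step, while also classical, ultimately relies on a rank-one twist by the formal symbol $u^s$ and should be cited or justified accordingly.
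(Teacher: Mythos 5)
Your argument is correct and rests on the same local factorization $\xi_n = u\cdot\xi_P$ near $q$ that drives the paper's proof, but you package the transfer of the Bernstein functional equation differently. The paper constructs a local Bernstein operator for $\xi_n$ at $q$ in one shot as $f^s L_1\cdots L_r f^{-s-1}$, where the $L_k$ are global Bernstein operators for the disjoint-variable factors $\xi_{P_k}$, and then checks by a Weyl-algebra computation that this operator lies in $\calD_{X,q}[s]$; this yields $b_{\xi_n,q} \mid b_{\xi_P}$ directly, after which the reverse divisibility is asserted rather briefly. You instead factor the reduction into three named standard steps --- invariance of the local $b$-function under multiplication by a unit, translation to the origin, and the equality $b_{f,0}=b_f$ for homogeneous $f$ via Euler-grading --- each of which is an equality rather than a one-sided divisibility, so the reverse inclusion comes for free. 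The two routes are nearly interchangeable: your unit-invariance step is precisely the paper's conjugation by $f^s$, and your translation-plus-homogeneity steps encode what the paper uses implicitly when it takes the $L_k$ to be global operators for the $\xi_{P_k}$. Your organization is a bit cleaner about equality versus divisibility, while the paper's is more self-contained in that it spells out the unit-twist computation explicitly instead of citing it as standard.
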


\begin{proof}
Let 
\[
f = \prod_{\{(i,j)\in P_a\times P_b\mid a\neq b, i < j\}} (x_i - x_j).
\]
Then we can write $\xi_n = f \xi_{P_1} \dots \xi_{P_n}$.

In the stalk $\mathcal{O}_{\bbC^n,q}$, we note that factors $(x_i-x_j)$ where $i$ and $j$ are in different $P_k$ are invertible, and so $f$ is invertible. 
None of the factors of any $\xi_{P_k}$ are invertible, however.  

Note that $\xi_{P_k}$ is just $\xi_{\lambda_k}$ up to renaming coordinates.
Thus they have the same $b$-function.
That is, if $L_k$ is a Bernstein operator for $\xi_{P_k}$, then
\[
L_k \xi_{P_k}^{s+1} = b_{\xi_{\lambda_k}}(s) \xi_{P_k}^s.
\]
Since the $\xi_{P_k}$ are defined in terms of disjoint sets of variables, the operators $L_k$ and polynomials $\xi_{P_l}$ all commute for $l \not = k$.
Define $L = f^s L_{1} \ldots L_r f^{-s-1}$.   
Then
\begin{align*}
L \cdot \xi_n^{s+1} &= 
f^s L_{1} \ldots L_r f^{-s-1} \cdot \xi_{P_1}^{s+1} \cdots \xi_{P_r}^{s+1} f^{s+1}  \\ 
&= f^s \left( L_{1} \cdot \xi_{P_1}^{s+1} \right) \ldots \left( L_r \cdot \xi_{P_r}^{s+1} \right) \\  
&= f^s b_{\xi_{\lambda_1}}(s) \xi_{P_1}^s \ldots b_{\xi_{\lambda_r}}(s) \xi_{P_r}^s  \\
&= b_{\xi_{\lambda_1}}(s) \ldots  b_{\xi_{\lambda_r}}(s) \xi_n^s.
\end{align*}

In order for the above computation to show that $L$ is a local Bernstein operator, we must show that $ L \in \calD_{\bbC^n,q}[s]$. 
Denote $|\bar l| = \sum_{i=1}^n l_i$, and $\bar x^{\bar l} = \prod_{i=1}^n x_i^{l_i}$, and by $(x)_l = x (x-1) \ldots (x-l+1),$ the falling Pochhammer symbol.
Write an expanded form for the operator
\[ 
L_1 \dots L_r = \sum_{i=1}^m c_i \bar x^{\bar j_i} \bar \partial^{\bar k_i}.
\]
Weyl algebra commutation relations then show, 
\begin{align*}
f^s L_1 \dots L_r f^{-s-1} &= f^s \sum_{i=1}^m c_i \left( \sum_{\bar l_i}^{\bar k_i} f^{-s-1-|\bar l_i|} (-s-1)_{|\bar l_i|} f^{(\bar l_i)} \bar \partial^{(\bar l_i - \bar k_i)} \right) \\
&=  \sum_{i=1}^m c_i \left( \sum_{\bar l_i}^{\bar k_i} f^{|\bar l_i|} (-s-1)_{|\bar l_i|} f^{(\bar l_i)} \bar \partial^{(\bar l_i - \bar k_i)} \right) .
\end{align*}
Thus, since $f$ is invertible at $q$, $L \in \calD_{X,q}[s].$

Therefore, we have shown the local $b$-function
\[
b_{\xi_n,q}(s) \mid b_{\xi_{P}}(s).
\]
Since $f$ is invertible and no factor of any $\xi_{P_k}$ is invertible, the local $b$-function $b_{\xi_n,q}(s)$ is a multiple of $b_{\xi_{P_1}\dots \xi_{P_r}}(s) = b_{\xi_P}(s)$.
Thus 
\[
b_{\xi_{P}}(s)  \mid b_{\xi_n,q}(s). 
\] 
So the local $b$-function for $\xi_n$ at $q$ is indeed $b_{\xi_{P}}(s)$. 
\end{proof}

\autoref{thm:bnm1_divides_bn} follows immediately.

\begin{proof}[Proof of \autoref{thm:bnm1_divides_bn}]
Choose $q \in \bbC^n$ such that the set partition $P$ defined as in the hypothesis of \autoref{prop:localb} is the set partition corresponding to $\lambda$. 
Then by \autoref{prop:localb}, $b_{\xi_\lambda}$ is equal to the local $b$-function $b_{\xi_n,q}$ and thus divides $b_{\xi_n}$. 
\end{proof}


\section{Blow-up computations}


We blow up $X = \mathbb{C}^n$ along the most singular locus of $V(\xi_n)$, namely,
\[
 Y = \lbrace x_1,\dots,x_n \mid x_1 = \dots = x_n \rbrace. 
\]  
We denote the pullback of $\xi_n$ to the blowup $\widetilde{X}=\bl_Y X$ by $\widetilde{\xi}_n$.
In any affine open subset $U$ of $\widetilde{X}$, it makes sense to compute the $b$-function of $\widetilde{\xi}_n|_U$.
We will denote the least common multiple of all such $b$-functions by $b_{\widetilde{\xi}_n}$, and call it the $b$-function of $\widetilde{\xi}_n$.

Since blowing up makes the variety less singular in some sense, the $b$-function of the blow-up is often easier to compute and simpler than the original divisor.
The precise relationship is given by Kashiwara, who proved the following theorem describing the relationship of $b_{\xi_n}$ to $b_{\tilde \xi_n}$ \cite[Theorem 5.1]{kas:76}.
As a corollary, one deduces that the roots of $b$-functions are negative rational numbers.

\begin{theorem}[Kashiwara, 1976] \label{thm:kashiwarablowup}
  Let $\widetilde{X} = \bl_Y X$.  Let $f$ be a function on $X$ which pulls back to $\widetilde{f}$ on $\widetilde{X}$.
  Then there exists an integer $N > 0$ such that 
  \begin{equation}
    \label{eqn:root-shifted-product}  
    b_f(s)\mid b_{\widetilde{f}}(s)b_{\widetilde{f}}(s+1)\ldots b_{\widetilde{f}}(s+N).
  \end{equation}
\end{theorem}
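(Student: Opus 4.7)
The plan is to push the $b$-function equation down from the blow-up $\widetilde{X}$ to $X$, exploiting properness of $\pi\colon \widetilde{X}\to X$ together with the fact that $\pi$ restricts to an isomorphism outside the exceptional divisor $E = \pi^{-1}(Y)$. On each affine open $U_i$ of a cover of $\widetilde{X}$, the local $b$-function provides an operator $\widetilde{L}_i(s) \in \calD_{\widetilde{X}}[s](U_i)$ with $\widetilde{L}_i(s)\,\widetilde{f}^{s+1} = b_{\widetilde{f}}(s)\,\widetilde{f}^s$. Since $\pi$ is an isomorphism on $\widetilde{X} \setminus E$, each such operator descends uniquely to $\pi(U_i)\setminus Y$ and the pushed-down equation reads $L_i(s)\,f^{s+1} = b_{\widetilde{f}}(s)\,f^s$ on the complement of $Y$.

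The substantive step is to assemble these local descents into one global equation on all of $X$. The direct image $\pi_+\widetilde{L}_i(s)$ a priori yields only an operator with meromorphic coefficients along $Y$, bounded by some pole order depending on the geometry of the blow-up. To clear these poles one iterates the $b$-function identity upstairs at shifted arguments: composing $\widetilde{L}(s+i)$ for $i = 0, 1, \ldots, N$ produces, on $\widetilde{X}$, an operator $\widetilde{L}_N(s)$ satisfying
\[
\widetilde{L}_N(s)\,\widetilde{f}^{s+N+1} = \prod_{i=0}^{N} b_{\widetilde{f}}(s+i)\cdot \widetilde{f}^s.
\]
Pushing this identity down to $X$, the composed operator inherits pole orders along $Y$ that grow linearly in $N$, but the factor $\widetilde{f}^{s+N+1}$ vanishes along $E$ with multiplicity that also grows linearly, and in fact at a strictly greater rate once $N$ exceeds the ratio of the two linear constants. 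For $N$ sufficiently large the vanishing absorbs all the poles, producing a genuine operator $L_N(s) \in \calD_X[s]$ with $L_N(s)\,f^{s+1} = \prod_{i=0}^{N} b_{\widetilde{f}}(s+i)\cdot f^s$. By minimality of $b_f(s)$ this yields the stated divisibility.

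The principal obstacle is making the pole-clearing estimate in the middle step precise. The cleanest route uses the $V$-filtration of $\calD_{\widetilde{X}}$ along $E$: multiplication by a defining equation of $E$ shifts $V$-filtration degree, composition satisfies a Leibniz-type bound on filtration degree, and the direct image under $\pi$ converts $V$-filtration order on $\widetilde{X}$ into pole order along $Y$ on $X$. An alternative, closer to Kashiwara's original, works in local coordinates on $\widetilde{X}$ in which $\widetilde{f}$ is a monomial times a unit along $E$, reducing the balance between pole and vanishing orders to an explicit calculation. Either way, the required $N$ can be taken to depend only on the multiplicities of $\widetilde{f}$ along the components of $E$ and the orders of the local Bernstein operators $\widetilde{L}_i(s)$.
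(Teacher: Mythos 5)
The paper does not actually prove this theorem; it cites Kashiwara \cite[Theorem 5.1]{kas:76} and sketches the proof later in the section. That sketch goes through the $\calD$-module direct image $\mathcal{N}' = \int_{\pi}\calD_{\widetilde{X}}[s]\widetilde{f}^s$, an auxiliary cyclic $\calD_X[s]$-module $\mathcal{N}''$ fitting into $\mathcal{N}_f \twoheadleftarrow \mathcal{N}'' \hookrightarrow \mathcal{N}'$, and a $\calD_X[s,t]$-module comparison in which the shift $t\colon s\mapsto s+1$ produces the product $b_{\widetilde{f}}(s)\cdots b_{\widetilde{f}}(s+N)$. Your proposal, which pushes the Bernstein operator down chart by chart and tries to clear poles by brute force, is a genuinely different and much more concrete route.

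Unfortunately your route has several gaps as written. First, a local Bernstein operator $\widetilde{L}_i(s)$ on an affine chart $U_i$ of the blow-up descends only to $\pi(U_i)\setminus Y$, and $\pi(U_i)$ is not $X\setminus Y$; its complement $H_i$ is a codimension-one hypersurface containing $Y$ (e.g.\ for $\bl_0\bbC^2$ and the standard chart $x=u$, $y=uv$, one gets $H_i = \{x=0\}$). So the descended coefficients are meromorphic along the hypersurface $H_i$, not ``along $Y$'' as you state. Second, the pole-clearing step then fails for a structural reason: the vanishing you invoke is that of $\widetilde{f}^N$ along $E$, but after descent you would need $f^N$ to vanish to high order along $H_i$, and there is no reason for $f$ to vanish on $H_i$ at all. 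Third, even granting the misidentification, the claim that the vanishing beats the poles ``once $N$ exceeds the ratio of the two linear constants'' is an assertion, not an argument: both the pole order of the composed pushdown (controlled by the order of $\widetilde{L}(s)\cdots\widetilde{L}(s+N)$) and the vanishing order of $f^N$ grow linearly in $N$, and whether one slope exceeds the other depends on the order of the Bernstein operator versus the multiplicity of $f$, which you never compare. Finally, assembling the several local descents into one identity on $X$ is flagged as ``the substantive step'' and then not carried out. These are precisely the difficulties that Kashiwara's module-theoretic formulation is designed to bypass: the module $\mathcal{N}''$ and the $t$-action encode the bookkeeping of shifts without ever tracking pole orders of explicit operators.
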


We will compute $b_{\tilde \xi_n}$, which by the above theorem gives an upper bound on $b_{\xi_n}$.

\begin{proposition} \label{prop:blowup}
  We have the equation
  \begin{equation}\label{eqn:prop-blowup}
    b_{\tilde\xi_n}(s) = \lcm_{\substack{\lambda\vdash n,\\ \lambda \neq n}} \left(b_{\xi_\lambda}(s)\right)
    \cdot \prod_{i=1}^{\binom{n}{2}}\left(s + \frac{i}{\binom{n}{2}} \right).
  \end{equation}
\end{proposition}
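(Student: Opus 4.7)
The plan is to compute $b$-functions chart-by-chart on $\widetilde X$ and take the least common multiple. After changing coordinates via $y_i = x_i - x_n$ for $i < n$ and $y_n = x_n$, the center becomes $Y = \{y_1 = \cdots = y_{n-1} = 0\}$ and
$\xi_n = \prod_{1\le i<j\le n-1}(y_i - y_j)\cdot\prod_{i=1}^{n-1}y_i$,
which is independent of $y_n$. The blowup $\widetilde X = \bl_Y X$ has $n-1$ affine charts, all equivalent under the $S_{n-1}$-permutation of $\{1,\ldots,n-1\}$. In chart $k$ the coordinates are $(u_1,\ldots,u_{n-1},y_n)$ with $y_k=u_k$ (the exceptional coordinate) and $y_i=u_iu_k$ for $i\ne k$; a direct pullback computation yields
\[
\tilde\xi_n = \pm\, u_k^{\binom{n}{2}}\cdot F_k(u_1,\ldots,\hat u_k,\ldots,u_{n-1}),
\]
and $F_k$ coincides, up to a global sign, with $\xi_n(v_1,\ldots,v_n)$ evaluated on the codimension-two slice $v_k=1$, $v_n=0$, $v_i=u_i$ for the other indices.

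Since $u_k$ is disjoint from the variables of $F_k$, the standard multiplicativity of Bernstein operators over disjoint variable sets---the very fact invoked in the proof of \autoref{prop:localb}---gives
\[
b_{\tilde\xi_n|_{\text{chart }k}}(s) = b_{u_k^{\binom{n}{2}}}(s)\cdot b_{F_k}(s) = \prod_{i=1}^{\binom{n}{2}}\left(s+\frac{i}{\binom{n}{2}}\right)\cdot b_{F_k}(s),
\]
using the classical $b$-function of a pure power for the first factor. The task thus reduces to verifying $b_{F_k}(s) = \lcm_{\lambda\vdash n,\,\lambda\ne(n)}b_{\xi_\lambda}(s)$, after which taking the lcm over charts yields \eqref{eqn:prop-blowup}.

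For this reduction I would compute $b_{F_k}$ as the lcm of its local $b$-functions at each $\bar u\in\bbC^{n-2}$. Each $\bar u$ determines a set partition $P$ of $\{1,\ldots,n\}$ by the coincidences among $(\bar u_1,\ldots,1,\ldots,\bar u_{n-1},0)$, with $1$ inserted in slot $k$ and $0$ in slot $n$, so that $k$ and $n$ automatically lie in different blocks of $P$. Near $\bar u$ the function $F_k$ factors as a unit times one vanishing factor per block $B$ of $P$, and these factors live in pairwise disjoint shifted coordinate sets. A block $B$ with $k,n\notin B$ contributes a genuine Vandermonde $\xi_{|B|}$ whose local $b$-function is $b_{\xi_{|B|}}$ by \autoref{prop:localb}. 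A block $B$ containing $k$ or $n$ contributes, up to sign, the restriction of $\xi_{|B|}$ to the hyperplane where the distinguished coordinate vanishes; by the translation invariance $\xi_{|B|}(w+t(1,\ldots,1)) = \xi_{|B|}(w)$, this restriction represents the descent of $\xi_{|B|}$ to $\bbC^{|B|}/\mathrm{diag}\cong\bbC^{|B|-1}$, whose $b$-function is still $b_{\xi_{|B|}}$ via the tensor decomposition $\calD(\bbC^{|B|})=\calD(\mathrm{diag})\otimes\calD(\bbC^{|B|}/\mathrm{diag})$. Multiplying contributions over blocks (by the same disjoint-variables rule) produces local $b$-function $b_{\xi_P}(s)=b_{\xi_\lambda}(s)$, where $\lambda$ is the shape of $P$.

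Every shape $\lambda\ne(n)$ is realized in chart $k$ by some set partition placing $k$ and $n$ in different blocks, so the lcm of local $b$-functions of $F_k$ indeed recovers $\lcm_{\lambda\ne(n)}b_{\xi_\lambda}(s)$. The main technical obstacle is the translation-invariance step identifying the $b$-function of the \emph{restricted Vandermonde} $\xi_{|B|}|_{w=0}$ with $b_{\xi_{|B|}}$; the remaining ingredients---the pullback computation, the application of \autoref{prop:localb}, and the multiplicativity of $b$-functions on disjoint variable sets---are routine.
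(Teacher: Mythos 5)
Your proof is correct, and its skeleton matches the paper's: split $\widetilde\xi_n$ on each chart into a pure power of the exceptional coordinate times a function $F_k$ in disjoint variables, identify the first factor's $b$-function as $\prod_{i=1}^{\binom{n}{2}}(s+i/\binom{n}{2})$ by the pure-power formula, and recover the $\lcm_{\lambda\neq (n)}b_{\xi_\lambda}$ factor from the local $b$-functions of the second piece. The route to that second factor, however, differs. The paper never analyzes $F_k$ itself: it observes that $F_k$ is independent of $u_k$, so its local $b$-functions can be read off at points with $u_k\neq 0$, where the blow-down map is an isomorphism onto $X\setminus Y$; there the germ of $\widetilde\xi_n$ is literally the germ of $\xi_n$ at a point off $Y$, and \autoref{prop:localb} gives the answer directly. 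You instead compute $F_k$ explicitly (correctly, with $F_k$ the Vandermonde specialized at slot $k=1$, slot $n=0$), stratify its base by coincidence pattern, and handle blocks containing the distinguished indices $k$ or $n$ by the translation-invariance of $\xi_m$ under $\bbC\mathbf{1}$ to identify the restricted Vandermonde's $b$-function with $b_{\xi_m}$. That extra step is genuine and correct, but the paper's use of the blow-down isomorphism makes it unnecessary. In exchange your version is self-contained on the chart level and does not need to refer back to $\xi_n$ on $X$; the paper's version is shorter and leans more heavily on \autoref{prop:localb}. One small point worth making explicit in either argument is that $\lcm_k\bigl(b_p\cdot b_{F_k}\bigr)=b_p\cdot\lcm_k b_{F_k}$, which holds because $b_p$ is chart-independent; your note that the charts are all $S_{n-1}$-conjugate handles this cleanly.
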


\begin{proof}
  The multiplicity of the exceptional divisor in $V(\widetilde{\xi}_n)$ equals $\binom{n}{2}$, namely the number of hyperplanes in $V(\xi_n)$ that intersect at $Y$.
  Locally in an affine chart of $\widetilde{X}$, it is possible to write $\widetilde{\xi}_n$ as $p\cdot q$, where $p$ is the $\binom{n}{2}$th power of a local equation of the exceptional divisor, and $q$ is a function in variables disjoint from those that occur in $p$.
  
  Therefore the $b$-function of $\widetilde{\xi}_n$ is the product of the $b$-functions of $p$ and of $q$.
  Since $V(p)$ is supported on a smooth divisor and has multiplicity $\binom{n}{2}$, we have
  \[
  b_p(s) = \prod_{i=1}^{\binom{n}{2}} \left( s + \frac{i}{\binom{n}{2}} \right).
  \]
  Notice that this is independent of the chart.
  
  We may compute $b_q(s)$ as the least common multiple of the local $b$-functions of $q$ at all points $\widetilde{x}\in V(q)$ away from the exceptional divisor.
  Recall that $\widetilde{X}\to X$ is an isomorphism outside the exceptional divisor.
  Therefore for any $\widetilde{x}$ as above, a local neighborhood around it is isomorphic to a local neighborhood of its image $x\in X$, which is a point of $V(\xi)$ away from $Y$.
  Hence the germ of $\widetilde{\xi}_n$ around $\widetilde{x}$ is isomorphic to the germ of $\xi_n$ around $x$.
  By \autoref{prop:localb}, we see that this germ is (up to a change of coordinates) some $\xi_P$, where $P$ is a non-trivial set partition of $\{1,\dots, n\}$.
  This implies that the local $b$-function of $q$ at $\widetilde{x}$ equals $b_{\xi_P}(s)$.
  All together, $b_q(s)$ is the least common multiple of factors $b_{\xi_P}(s)$, where $P$ ranges over those non-trivial set partitions of $n$ that occur in $V(q)$ away from the exceptional divisor.

  In fact, it is clear that every non-trivial set partition $P$ occurs at some point of $V(q)$ in some local chart.
  Since every $\xi_P$ is isomorphic to some $\xi_\lambda$ up to a permutation of the coordinates, we have $b_{\xi_P}(s) = b_{\xi_\lambda}(s)$ for that $\lambda$.
  Taking the least common multiple of the $b$-functions of $\widetilde{\xi}_n$ on each local chart and over all $P$, we obtain the equation
  \[
  b_{\widetilde{\xi}_n}(s) = \lcm_{\substack{\lambda\vdash n,\\ \lambda \neq n}} \left(b_{\xi_\lambda}(s)\right)
    \cdot \prod_{i=1}^{\binom{n}{2}}\left(s + \frac{i}{\binom{n}{2}} \right).
  \]
\end{proof}

Thus applying \autoref{thm:kashiwarablowup} in the case of \autoref{prop:blowup} we conclude the following.

\begin{corollary} \label{cor:blowupupper}
The $b$-function of $\xi_n$ divides 
\[
\lcm_{\substack{\lambda\vdash n,\\ \lambda \neq n}}\left(b_{\xi_{\lambda}}(s)\right) \cdots \lcm_{\substack{\lambda\vdash n,\\ \lambda \neq n}}\left(b_{\xi_{\lambda}}(s+N)\right)\cdot
\left(s + \frac{1}{\binom{n}{2}} \right) \ldots \left(s + \frac{M}{\binom{n}{2}} \right)
\]
for some $N$ and some $M$ large enough.
\end{corollary}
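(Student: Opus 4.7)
The plan is to combine Proposition \ref{prop:blowup} with Kashiwara's blow-up theorem (Theorem \ref{thm:kashiwarablowup}) and then repackage the resulting formula. This is really a bookkeeping exercise rather than a new computation, so I expect no serious obstacle.

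First, I would apply Theorem \ref{thm:kashiwarablowup} to $f = \xi_n$ and $\tilde f = \widetilde{\xi}_n$ to obtain some positive integer $N$ for which
\[
b_{\xi_n}(s) \;\bigm|\; \prod_{k=0}^{N} b_{\widetilde{\xi}_n}(s+k).
\]
Next, I would substitute the explicit formula from Proposition \ref{prop:blowup} into each factor on the right, so that
\[
\prod_{k=0}^{N} b_{\widetilde{\xi}_n}(s+k)
\;=\;
\prod_{k=0}^{N}\left[\lcm_{\substack{\lambda\vdash n\\ \lambda\neq n}}\bigl(b_{\xi_\lambda}(s+k)\bigr)\right]
\cdot \prod_{k=0}^{N} \prod_{i=1}^{\binom{n}{2}}\left(s+k+\frac{i}{\binom{n}{2}}\right).
\]

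The last step is to notice that the double product over $k$ and $i$ reindexes as a single product. Since the shifts $k + i/\binom{n}{2}$ range over $j/\binom{n}{2}$ for $j = 1, 2, \dots, (N+1)\binom{n}{2}$ exactly once as $(k,i)$ varies over $\{0,\dots,N\}\times\{1,\dots,\binom{n}{2}\}$, setting $M = (N+1)\binom{n}{2}$ yields
\[
\prod_{k=0}^{N}\prod_{i=1}^{\binom{n}{2}}\left(s+k+\frac{i}{\binom{n}{2}}\right)
\;=\;
\prod_{j=1}^{M}\left(s+\frac{j}{\binom{n}{2}}\right).
\]
Substituting this in gives exactly the stated divisibility, where the first $N+1$ factors in the corollary's formula are the shifted lcm terms and the trailing linear factors come from the exceptional divisor contribution.

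The only thing that requires any thought is verifying that the reindexing above really is a bijection (and not merely a surjection) so that no extra factors are introduced, but this is immediate since each integer $j$ in $\{1,\dots,M\}$ has a unique decomposition $j = k\binom{n}{2} + i$ with $0\leq k\leq N$ and $1\leq i\leq \binom{n}{2}$. The corollary then follows at once.
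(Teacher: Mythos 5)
Your proof is correct and follows exactly the same route as the paper: combine Theorem \ref{thm:kashiwarablowup} with Proposition \ref{prop:blowup} and reindex the product of linear factors. The reindexing check you perform at the end is indeed the only point requiring attention, and your verification that $j = k\binom{n}{2}+i$ gives a bijection onto $\{1,\dots,(N+1)\binom{n}{2}\}$ is exactly what is needed.
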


We conjecture an improved version of \autoref{cor:blowupupper}. 
\begin{conjecture} \label{conj:blowupupper}
The $b$-function of $\xi_n$ divides 
\[
\lcm_{\substack{\lambda\vdash n,\\ \lambda \neq n}}\left(b_{\xi_{\lambda}}(s)\right)\cdot \left(s + \frac{1}{\binom{n}{2}} \right) \ldots \left(s + \frac{M}{\binom{n}{2}} \right)
\]
for some $M$ large enough.
\end{conjecture}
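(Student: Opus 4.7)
The plan is to sharpen \autoref{cor:blowupupper} by eliminating the shifted copies $\lcm_\lambda b_{\xi_\lambda}(s+k)$ for $k \geq 1$ that appear in Kashiwara's product. The first step is to show that every root of $b_{\xi_n}(s)$ lies in the open interval $(-2,0)$. This follows by combining three facts: $-1$ is always a root of a nontrivial $b$-function; the roots of any $b$-function are negative rational numbers by Kashiwara's theorem; and by \cite[Theorem 1]{sai:06}, the difference between any two roots of the $b$-function of a hyperplane arrangement is strictly less than $2$. The symmetry about $-1$ supplied by \autoref{cor:symmetry-b} then forces the extremal roots to satisfy $r_{\max} + r_{\min} = -2$ with $r_{\max} - r_{\min} < 2$, which together pin all roots to $(-2,0)$.

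Next I would induct on $n$, assuming \autoref{conj:blowupupper} already for all smaller ranks. Under this hypothesis, the roots of $b_{\xi_\lambda} = \prod_i b_{\xi_{\lambda_i}}$ for any $\lambda \vdash n$ with $\lambda \neq (n)$ also lie in $(-2,0)$. Combining \autoref{prop:blowup} with the root-by-root version of \autoref{thm:kashiwarablowup}, any root $c$ of $b_{\xi_n}$ equals $c' - k$ for some root $c'$ of $b_{\widetilde{\xi}_n}(s)$ and some integer $k \in \{0,1,\dots,N\}$. Since both $c$ and $c+k$ must lie in $(-2,0)$, only $k \in \{0,1\}$ is possible. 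The case $k = 0$ produces exactly the factors appearing on the right-hand side of the conjecture, so the entire content of the improvement over \autoref{cor:blowupupper} is the case $k = 1$ coming from the $\lcm_\lambda b_{\xi_\lambda}$ summand (shifts attached to the exceptional factor $(s+i/\binom{n}{2})$ are already absorbed by letting $M$ grow).

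The main obstacle is therefore the case $k = 1$ with $c + 1$ a root of some $b_{\xi_\lambda}$ rather than of the exceptional factor. One natural approach uses the symmetry about $-1$ twice: applied to $b_{\xi_n}$ it yields a second root $-2-c \in (-1,0)$ of $b_{\xi_n}$, and applied to $b_{\xi_\lambda}$ (whose roots are symmetric about $-1$ by the inductive hypothesis and \autoref{cor:symmetry-b} for each factor) it yields a root $-1-c$ of $b_{\xi_\lambda}$. The goal is to conclude that $-2 - c$, lying in the range where no Kashiwara shift is permitted, must arise from the $k=0$ part of the bound, and then transfer this conclusion back to $c$ via the symmetry of $b_{\xi_n}$. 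A parallel difficulty is that the conjecture is a divisibility statement, so one must also bound multiplicities: for each root $c$ of $b_{\xi_n}$ that is not of the form $-i/\binom{n}{2}$, the multiplicity of $c$ in $b_{\xi_n}$ must be at most $\max_\lambda \mathrm{mult}_c(b_{\xi_\lambda})$, and the root-level symmetry argument above does not by itself yield this. Closing the symmetry loop and controlling multiplicities — likely via a more refined microlocal or $V$-filtration analysis near the generic point of the exceptional divisor of $\bl_Y X$, going beyond the coarse Kashiwara bound — is the real hurdle, and is probably why the authors leave this as a conjecture.
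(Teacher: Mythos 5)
This statement is \autoref{conj:blowupupper}, which the paper leaves explicitly as a \emph{conjecture}: there is no proof in the paper to compare against, only an outlined strategy. The paper's outlined strategy is module-theoretic — it proposes decomposing Kashiwara's module $\mathcal{N}' = \int \calD_X[s]\widetilde{\xi}_n^s$ into a piece supported over the exceptional divisor (accounting for $c(s) = \prod_{i=1}^{\binom{n}{2}}(s+i/\binom{n}{2})$) and a complementary piece, so that the Kashiwara shifts $s \mapsto s+k$ need only be applied to the exceptional piece. Your approach — control the roots by Saito's interval bound and \autoref{cor:symmetry-b}, then trim the Kashiwara product root by root — is a genuinely different route, working entirely at the level of polynomial divisibility rather than inside the category of $\calD[s,t]$-modules.

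That said, you are more pessimistic about your own approach than you need to be. You worry about the $k=1$ shift hitting a root of some $b_{\xi_\lambda}$ and about multiplicities, but the argument can be organized so that the $k=1$ case never has to be analyzed head-on, and the multiplicity bookkeeping comes for free. The roots of $b_{\xi_n}$ lie in $(-2,0)$ (negative rationals by Kashiwara, then symmetry about $-1$ via \autoref{cor:symmetry-b} pins the interval), and all roots of $b_{\widetilde{\xi}_n}$ are negative. Hence for any root $c\in[-1,0)$ of $b_{\xi_n}$, every shifted value $c+k$ with $k\ge 1$ is nonnegative and thus not a root of $b_{\widetilde{\xi}_n}$; Kashiwara's divisibility already forces $\operatorname{mult}_c(b_{\xi_n}) \le \operatorname{mult}_c(b_{\widetilde{\xi}_n})$. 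For a root $c\in(-2,-1)$, use the \emph{exact} multiplicity equality $\operatorname{mult}_c(b_{\xi_n}) = \operatorname{mult}_{-2-c}(b_{\xi_n})$ from \autoref{cor:symmetry-b}, note $-2-c\in(-1,0)$, and apply the clean bound there. Then unpack $b_{\widetilde{\xi}_n}$ via \autoref{prop:blowup}: the $\lcm_\lambda b_{\xi_\lambda}$ factor is itself symmetric about $-1$ (each $b_{\xi_{\lambda_i}}$ is, by \autoref{cor:symmetry-b} applied in lower rank), so $\operatorname{mult}_{-2-c}(\lcm_\lambda b_{\xi_\lambda}) = \operatorname{mult}_c(\lcm_\lambda b_{\xi_\lambda})$; the exceptional factor is not symmetric, but under $c\mapsto -2-c$ its simple roots $-i/\binom{n}{2}$ for $1\le i\le \binom{n}{2}$ merely move to $-j/\binom{n}{2}$ for $\binom{n}{2}\le j\le 2\binom{n}{2}-1$, which are still of the form the conjecture allows as long as $M\ge 2\binom{n}{2}-1$. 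This yields, for every root $c$, the bound $\operatorname{mult}_c(b_{\xi_n}) \le \operatorname{mult}_c(\lcm_\lambda b_{\xi_\lambda}) + \mathbf{1}_{\{c = -j/\binom{n}{2},\ 1\le j\le 2\binom{n}{2}-1\}}$, which is exactly the conjectured divisibility with $M = 2\binom{n}{2}-1$. No induction on $n$ is needed — the interval and symmetry facts for the lower-rank $\xi_{\lambda_i}$ come from the same general theorems, not from the conjecture itself.

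You should sanity-check this carefully before celebrating: the authors explicitly left this as a conjecture and proved only the weaker \autoref{prop:upper-bound}, so either they overlooked this shortcut, or one of the ingredients is more delicate than it appears. The two places to scrutinize are the precise hypotheses and statement of Kashiwara's \autoref{thm:kashiwarablowup} (in particular whether the paper's chart-wise definition of $b_{\widetilde{\xi}_n}$ is exactly the object to which the shifted-product divisibility applies), and the proof of \autoref{prop:blowup}, which relies on a product formula $b_{pq} = b_p\, b_q$ for $p,q$ in disjoint sets of variables and a chart-by-chart lcm computation. If those are airtight, your approach — suitably reorganized as above — appears to close the gap.
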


We now outline a possible method towards proving this conjecture.
Let us briefly recall the steps of Kashiwara's proof of \autoref{thm:kashiwarablowup}.
The strategy in this proof is to compare the $\calD_X[s]$-modules $\mathcal{N}_{f} = \calD_X[s]f^s$ and $\mathcal{N}' = \int \calD_X[s] \tilde f^s$.
Kashiwara also constructs a certain cyclic $\calD_X[s]$-module $\mathcal{N}''$ that maps to both of them as follows:
\begin{center}
  \begin{tikzcd}
    \mathcal{N}_{f} \arrow[twoheadleftarrow]{r} & \mathcal{N}'' \arrow[hook]{r} & \mathcal{N}'.
  \end{tikzcd}
\end{center}
In fact, all of the modules above have an action of the shift operator $t$ that shifts all instances of $s$ to $(s+1)$, and hence they are all $\calD[s,t]$-modules.
Let $\mathcal{N}$ be a $\calD[s,t]$-module.
The minimal polynomial for the $s$-action on $(\mathcal{N}/t\mathcal{N})$, if it exists, is called the $b$-function of $\mathcal{N}$, and denoted $b(\mathcal{N})$.
All the $\calD[s,t]$ modules described above have $b$-functions, and they coincide with the Bernstein-Sato $b$-functions $b_f$ and $b_{\widetilde{f}}$ for $\mathcal{N}_f$ and $\calD_X[s]\tilde{f}^s$ respectively.
It is easy to check that $b(\mathcal{N}_f)$ divides $b(\mathcal{N}'')$, and that $b(\mathcal{N}'')$ divides $b_{\widetilde{f}}$.

The equation \eqref{eqn:root-shifted-product} in the proof of \autoref{thm:kashiwarablowup} (in \cite{kas:76}) is the result of comparing the $b$-functions of $\mathcal{N}''$ and $\mathcal{N}'$.
Looking at the difference in the action of $s$ across the inclusion $\mathcal{N}'' \hookrightarrow \mathcal{N}'$ is the reason we cannot write $b_f(s) \mid b_{\widetilde{f}}(s)$ but must instead write a product $b_{\widetilde{f}}(s) \cdots b_{\widetilde{f}}(s+N)$ for the right-hand side.

In the case of $f = \xi_n$, recall that $b_{\tilde \xi_n} = \lcm_{\lambda\vdash n, \lambda \neq n}\left(b_{\xi_\lambda}(s)\right)\cdot c(s)$, where
\[
c(s) = \prod_{i=1}^{\binom{n}{2}}\left(s + \frac{i}{\binom{n}{2}} \right).
\]
From the proof of \autoref{prop:blowup}, we can see that $c(s)$ is the $b$-function of the exceptional divisor, while the other factor comes from outside the exceptional divisor.
Outside the exceptional divisor, the blow-up map is an isomorphism, and thus $\mathcal{N}_{f} \cong \mathcal{N}'$.

It may therefore be possible to decompose $\mathcal{N}'$ into parts corresponding to $c(s)$ and each $b_{\xi_\lambda}(s)$ so that we need only write a product for the $c(s)$ factor, thus obtaining $b_f(s) \mid \lcm_{\lambda\vdash n, \lambda \neq n}\left(b_{\xi_\lambda}(s)\right)\cdot c(s) \cdots c(s+N)$.

\section{Partial proof of main conjecture}

The following proposition gives our best upper bound for $b_{\xi_n}$.

\begin{proposition}
  \label{prop:upper-bound}
  The $b$-function of $\xi_n$ divides the following polynomial: 
  \begin{equation}\label{eqn:upper-bound}
\lcm_{\substack{\lambda\vdash n\\ \lambda \neq (n)}}\left(b_{\xi_\lambda}(s)\right) \lcm_{\substack{\lambda\vdash n\\ \lambda \neq (n)}}\left(b_{\xi_\lambda}(s+1)\right) \cdot \prod_{i=(n-1)}^{(n-1)^2}\left(s + \frac{i}{\binom{n}{2}} \right).
  \end{equation}
\end{proposition}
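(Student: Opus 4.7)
The plan is to start from \autoref{cor:blowupupper} and refine it in two moves: first I would restrict the range of Kashiwara shifts to $k \in \{0,1\}$, then I would prune the exceptional-divisor factor using the precise range of roots of $b_{\xi_n}$.

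The first move relies on the observation that every root of $b_{\xi_n}$ lies in $(-2, 0)$. Combining \autoref{cor:symmetry-b} (symmetry of roots about $-1$) with Saito's bound \cite{sai:06} that any two roots of the $b$-function of a hyperplane arrangement differ by less than $2$, the root set spans an interval of length strictly less than $2$ centered at $-1$ and is therefore contained in $(-2, 0)$. The same argument applies to each $b_{\xi_\lambda}$ with $\lambda \ne (n)$, and direct inspection shows the roots of $c(s) = \prod_{i=1}^{\binom{n}{2}}(s+i/\binom{n}{2})$ lie in $[-1, 0)$, so the roots of $b_{\tilde\xi_n}(s)$ also lie in $(-2, 0)$. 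Tracking multiplicities through \autoref{thm:kashiwarablowup}, a shift $k$ contributes to the multiplicity of a root $r$ of $b_{\xi_n}$ only if $r + k$ is a root of $b_{\tilde\xi_n}$; since $r, r+k \in (-2, 0)$ this forces $k \in \{0, 1\}$, giving
\[
b_{\xi_n}(s) \mid b_{\tilde\xi_n}(s) \cdot b_{\tilde\xi_n}(s+1) = \lcm_{\lambda \ne (n)} b_{\xi_\lambda}(s) \cdot \lcm_{\lambda \ne (n)} b_{\xi_\lambda}(s+1) \cdot c(s) \cdot c(s+1).
\]

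For the second move, I would compute the log canonical threshold of $\xi_n$ by minimizing $\operatorname{codim}(W)/m(W)$ over edges $W$ of the arrangement, where $m(W)$ denotes the number of hyperplanes containing $W$. A direct optimization over set partitions shows the minimum is attained at the diagonal $Y = \{x_1 = \cdots = x_n\}$, giving $\operatorname{lct}(\xi_n) = (n-1)/\binom{n}{2}$. Since the largest root of $b_f$ always equals $-\operatorname{lct}(f)$, the largest root of $b_{\xi_n}$ is $-(n-1)/\binom{n}{2}$; invoking \autoref{cor:symmetry-b} once more, the smallest root must be $-(n-1)^2/\binom{n}{2}$. Consequently the only roots of $b_{\xi_n}$ of the form $-i/\binom{n}{2}$ have $n-1 \le i \le (n-1)^2$. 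To conclude, I would expand $c(s) c(s+1) = \prod_{i=1}^{n(n-1)}(s+i/\binom{n}{2})$ as a product of distinct linear factors: the factors with $i \notin [n-1, (n-1)^2]$ correspond to numbers that are not roots of $b_{\xi_n}$ by the preceding step and can be dropped, while the remaining factors each contribute multiplicity one, matching the stated bound.

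The main obstacle is pinning down $\operatorname{lct}(\xi_n)$ as $(n-1)/\binom{n}{2} = 2/n$, which is strictly sharper than the generic estimate $n/\deg(\xi_n) = 2/(n-1)$ obtainable from \autoref{thm:n-over-d}. The sharper value reflects the non-generic geometry of $V(\xi_n)$: the intersection of all hyperplanes is a line rather than a point. Justifying it rigorously requires the combinatorial lct formula for central hyperplane arrangements (or equivalently a direct jumping-coefficients argument at the most singular stratum $Y$), which is precisely where the jumping-coefficients input advertised in the abstract enters.
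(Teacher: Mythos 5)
Your proposal is correct and tracks the paper's argument closely: both rely on Kashiwara's blow-up theorem via \autoref{cor:blowupupper}, Saito's bound confining the roots of $b_{\xi_k}$ to $(-2,0)$, the symmetry from \autoref{cor:symmetry-b}, and Musta{\c{t}}{\u{a}}'s combinatorial description of the minimal jumping number to pin down $\operatorname{lct}(\xi_n) = (n-1)/\binom{n}{2}$. The organizational split into ``restrict the Kashiwara shifts to $\{0,1\}$'' and then ``prune $c(s)c(s+1)$'' is a repackaging of the paper's single step of discarding all factors whose roots fall outside $\left[-(n-1)^2/\binom{n}{2},\,-(n-1)/\binom{n}{2}\right]$.

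The one genuine difference is worth flagging: you compute $\operatorname{lct}(\xi_n)$ by directly minimizing $\operatorname{codim}(W)/m(W)$ over all set partitions (showing the minimum $\sum\lambda_i^2 \le n(n-r+1)$ is attained only at the full diagonal), whereas the paper instead invokes Saito's reformulation \eqref{eqn:mustata-min-alpha} in terms of local $b$-functions together with an \emph{inductive} hypothesis that the smallest jumping number of $\xi_{\lambda_i}$ is $(\lambda_i-1)/\binom{\lambda_i}{2}$ for $\lambda_i < n$. Your direct optimization is arguably tidier: it sidesteps the induction entirely (the paper's inductive step as written only yields an upper bound on the lct of $\xi_{\lambda_i}$, and the matching lower bound quietly requires an input such as \autoref{thm:n-over-d}). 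Either route requires Musta{\c{t}}{\u{a}}'s formula as the essential external input, exactly as you identified.
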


\begin{proof} 

We prove by induction; assume the formula holds for $b_{\xi_{n-1}}(s)$.

There is a relationship between $b$-functions and another singularity invariant, jumping coefficients.
Denote the \emph{multiplier ideal} associated to a function $f$ and a positive rational number $p > 0$ as $\mathcal{I}(f^p)$.
Analytically, this is the ideal in $\mathcal{O}_X$ of functions $h$ for which $|h|^2/|f|^{2p}$ is locally integrable.
(See \cite{ein.laz.smi.ea:04} for an algebro-geometric definition.)
The values of $p$ at which $\mathcal{I}(f^p)$ changes size are called \emph{jumping coefficients}.  
By work of Yano \cite{yan:83}, Lichtin \cite{lic:89}, and Koll\'{a}r \cite[Theorem 10.6]{kol:97}, it is known that the smallest jumping coefficient $\alpha$ satisfies $b_{f}(-\alpha) = 0$, and that in fact $-\alpha$ is the largest root of the $b$-function.

Let $\mathcal{A}$ be a central hyperplane arrangement of $d$ hyperplanes embedded essentially into $\bbC^m$ (i.e. $m$ is minimal).
Let $f$ be a function describing $\mathcal{A}$.
For $S \subset \mathcal{A}$, denote $W_S=\bigcap_{H \in S}H$.
By Musta{\c{t}}{\u{a}} \cite[Corollary 0.3]{mus:06}, the minimal jumping coefficient of $\mathcal{A}$ is 
\[
  \alpha = \min_{\emptyset \not = S  \subset \mathcal{A}}  \frac{\mathrm{codim}\left( W_S\right)}{ \left| \left\lbrace H\mid H \supset W_S     \right\rbrace\right|}.
	\]
Using the above relationship between $b$-functions and jumping coefficients, we can rephrase this (as was done in \cite[Equation 0.2]{sai:06}) as follows:
\begin{equation}\label{eqn:mustata-min-alpha}
\alpha = \min \left( \left\lbrace \frac{m}{d} \right\rbrace \cup \left\lbrace \bigcup_{0 \not = x \in \bbC^m}  \left\lbrace s \mid b_{f,x}(-s)=0 \right\rbrace   \right \rbrace \right).
\end{equation}

The arrangement defined by $\xi_n$ has degree $d =\binom{n}{2}$.
It embeds essentially in $(n-1)$ dimensions via the map $\varphi:\bbC^n \to \bbC^{n-1}$ defined by $y_i = x_i - x_{i+1}$.
In this embedding, the most singular locus $Y$ corresponds to $\{0\}$ alone.  
By \autoref{prop:localb}, for $p\notin Y$, that is, for $p \not = 0$, the local $b$-function equals $b_{\xi_n,p} = b_{\xi_{\lambda_1}} \dots b_{\xi_{\lambda_r}}$ where $\lambda \vdash n$ and $\lambda \not = n$.  
Since $\lambda \not = n$, we have $\lambda_i < n$ for each $i$.
By induction, the minimal $s$ such that $b_{\xi_{\lambda_i}}(-s)=0$ is $(\lambda_i-1)/\binom{\lambda_i}{2}$.
Thus $m/d = (n-1)/\binom{n}{2}$ is the minimal element in \eqref{eqn:mustata-min-alpha}. 
Hence, the roots of $b_{\xi_n}$ are bounded above by $-(n-1)/\binom{n}{2}$.  
By \autoref{cor:symmetry-b}, we see that 
\[ 
\frac{(n-1)}{\binom{n}{2}} - 2 = -\frac{(n-1)^2}{\binom{n}{2}} 
\] 
is a lower bound.  

Thus all the roots of $b_{\xi_n}$ are in the interval $\left[-(n-1)^2/\binom{n}{2}, -(n-1)/\binom{n}{2}\right]$.
We can thus create a tighter upper bound for $b_{\xi_n}$ from \autoref{cor:blowupupper} by removing factors that correspond to roots outside of this interval.
Recall that by \cite[Theorem 1]{sai:06}, the roots of $b_{\xi_k}$ are contained in the interval $(0,2)$ and thus for $\lambda \vdash k$, the roots of $b_{\xi_\lambda}$ are contained in $(0,2)$ as well.
Thus all of $\lcm_{\lambda\vdash n,\lambda \neq (n)}\left(b_{\xi_\lambda}(s+i)\right)$ where $i \geq 2$ lies outside the interval. 
Removing all factors outside this range thus yields \eqref{eqn:upper-bound} as claimed.       
\end{proof}
The Main Conjecture can be proved by means of the above proof provided we replace \autoref{cor:blowupupper} with \autoref{conj:blowupupper}, which is a stronger statement.

\appendix
\section{Note on irreducibility of function from \texorpdfstring{\cite{wal:14}}{[wal14]}}

This section is an addendum to \cite{wal:14}, a paper by the second named author.  This paper studies a function $f$ defined on $X = M_n(\bbC) \times \bbC^n$ by 
\[
f(M,v) = \det([v\ Mv\ \cdots\ M^{n-1}v]),
\]    
 which is relative invariant with character $\det$.  The paper contains the conjecture that $f$ is irreducible.  It was kindly pointed out by Andr\'{a}s L\H{o}rincz that this conjecture follows from \cite[Proposition 2.3]{wal:14}.

\begin{proposition}[L\H{o}rincz]\label{prop:lorincz}
The polynomial $f$ is irreducible.
\end{proposition}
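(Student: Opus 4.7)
The plan is to exploit the underlying $G = GL_n(\bbC)$-action on $X = M_n(\bbC) \times \bbC^n$ given by $g \cdot (M, v) = (gMg^{-1}, gv)$, under which a direct computation shows that $f(g \cdot (M, v)) = \det(g) \cdot f(M, v)$. Proposition 2.3 of \cite{wal:14} describes the semi-invariants for this action, and I would use it to conclude that every character of a non-zero polynomial semi-invariant is of the form $\det^a$ with $a$ a non-negative integer.

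Suppose for contradiction that $f = f_1 f_2$ is a factorization into non-constant polynomials. Because $G$ is connected, the $G$-action permutes the irreducible factors of $f$ only up to scalar multiplication and with no non-trivial permutation, so each $f_i$ is itself a semi-invariant, with some character $\chi_i$, and $\chi_1 \chi_2 = \det$. By the character description, $\chi_i = \det^{a_i}$ with $a_i \geq 0$ and $a_1 + a_2 = 1$, so exactly one of the $a_i$ is zero. Say $\chi_1$ is trivial, making $f_1$ an absolute $G$-invariant. The scalar subgroup $\bbC^\times \subset G$ acts by $(M, v) \mapsto (M, \lambda v)$, so $f_1$ is independent of $v$; being conjugation-invariant in $M$, it is a polynomial in the coefficients of the characteristic polynomial of $M$.

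To obtain a contradiction, I would specialize $M$ to a diagonal matrix $\mathrm{diag}(\lambda_1, \ldots, \lambda_n)$: pulling the common factor $v_i$ out of row $i$ of $[v, Mv, \ldots, M^{n-1}v]$ and recognizing the remaining Vandermonde determinant yields
\[
f(M, v) = v_1 v_2 \cdots v_n \cdot \prod_{i < j}(\lambda_j - \lambda_i).
\]
Under this specialization $f_1$ becomes a symmetric polynomial $s$ in $\lambda_1, \ldots, \lambda_n$ dividing the right-hand side; since $s$ involves none of the $v_i$, it must divide $\prod_{i<j}(\lambda_j - \lambda_i)$ in $\bbC[\lambda_1, \ldots, \lambda_n]$. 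Writing $\prod_{i<j}(\lambda_j - \lambda_i) = s \cdot t$, symmetry of $s$ forces $t$ to be antisymmetric, hence itself divisible by $\prod_{i<j}(\lambda_j - \lambda_i)$, which by a degree count forces $s$ to be constant---a contradiction.

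The main obstacle is identifying the exact statement of Proposition 2.3 of \cite{wal:14}; if that result gives only the character of $f$ itself, one would need to supplement it with a description of all characters of polynomial semi-invariants, which can be read off from the $G$-module structure of $\bbC[M_n(\bbC) \times \bbC^n]$. The remaining steps---the connectedness argument showing that factors of a semi-invariant are semi-invariant, and the specialization to diagonal matrices---are routine once the character structure is in hand.
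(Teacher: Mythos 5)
Your proof is correct, but it takes a genuinely different route from the paper. The paper works with the full prime factorization $f = \prod p_i^{m_i}$, observes via Sato that each $p_i$ is a relative invariant with character $\det^{r_i}$ and that $\sum m_i r_i = 1$ forces some $r_i > 0$, and then invokes \cite[Proposition 2.3]{wal:14} --- which classifies all polynomial semi-invariants of positive character as $f^r \cdot h$ with $h$ absolutely invariant --- to conclude $f \mid p_i$, hence $f = p_i$. You avoid that classification entirely: you restrict to the center $\bbC^\times \subset GL_n$ to pin down that the characters $\det^{a_i}$ have $a_i \geq 0$ (since a semi-invariant homogeneous of $v$-degree $d$ satisfies $a = d/n \geq 0$), force one factor to be an absolute invariant, and then kill it directly by specializing to diagonal $M$ and using the symmetric/antisymmetric splitting of the Vandermonde. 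This is more elementary and self-contained, at the cost of two steps the paper gets for free from its reference: the non-negativity of characters (which you flag but do not fully prove --- the center argument sketched above closes it) and the fact that a non-constant absolute invariant cannot divide $f$ (which the paper never needs to address, since Proposition 2.3 gives divisibility by $f$ outright). Both approaches lean on the same Sato-style connectedness fact that irreducible factors of a relative invariant under a connected group are themselves relative invariants, and both exploit that the character lattice of $GL_n$ is $\mathbb{Z}\cdot\det$.
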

\begin{proof}
Consider a prime factorization
\begin{equation}\label{eqn:factorization}
 f = \prod_{i=1}^{k} p_i^{m_i}
\end{equation}
 with $m_i$ positive integers.  
By \cite[Proposition 2(2)]{sat:90}, $p_i$ is relative invariant with character $\det^{r_i}$ where $r_i$ is an integer.
Looking at the characters of \autoref{eqn:factorization}, we must have
\[
1 = \sum_{i = 1}^k m_i r_i.
\]  
Thus there is some $i$ such $r_i > 0$.  By \cite[Proposition 2.3]{wal:14}, $p_i = f^{r_i} h$ where $h \in \bbC[X]^{GL_n(\bbC)}$.  Thus $f$ divides $p_i$.  So $f = p_i$ is irreducible.    
\end{proof}

\bibliographystyle{amsalpha}
\bibliography{bibliography}

\end{document}